\documentclass[11pt]{amsart}
\usepackage[margin=1in]{geometry}
\usepackage{hyperref}
\usepackage{amsmath}
\usepackage{amssymb}
\usepackage{amsthm}
\usepackage{graphicx}
\usepackage{graphics}
\usepackage{tikz-cd}
\usepackage{cite}
\usepackage[mathscr]{euscript}
\usepackage{shuffle}
\usepackage{physics}
\usepackage{enumerate}
\usepackage{enumitem}

\theoremstyle{plain}
\newtheorem{Th}{Theorem}[section]
\newtheorem{Lemma}[Th]{Lemma}

\newtheorem{Cor}[Th]{Corollary}
\newtheorem{Prop}[Th]{Proposition}

\theoremstyle{definition}
\newtheorem{Def}[Th]{Definition}

\newtheorem{Rem}[Th]{Remark}
\newtheorem{?}[Th]{Problem}

\newcommand{\K}{\mathbb{K}}

\newcommand{\Z}{\mathbb{Z}}

\DeclareMathOperator{\id}{id}

\DeclareMathOperator{\ad}{ad}

\DeclareMathOperator{\conc}{conc}

\DeclareSymbolFont{rsfs}{U}{rsfs}{m}{n}
\DeclareSymbolFontAlphabet{\mathscrsfs}{rsfs}

\begin{document}
\title{Double shuffle relations and double antipodes}
\author{Muze Ren}
\address{Institut de Recherche Math\'ematique Avanc\'ee, UMR 7501, Universit\'e de Strasbourg et CNRS, 7
rue Ren\'e Descartes, 67000 Strasbourg, France}
\email{muze.ren@unige.ch}
\maketitle
\begin{abstract}
In this note, we prove that for every Lie series $\psi$ with no terms of degree less than 3, the relation $$[\psi(x,y),x]+[\psi(-x-y,y),-x-y]=0$$ is equivalent to $S_*(\psi_*)=-\psi_*$, where $\psi_*$ denotes the regularization of $\psi$ and $S_*$ is the harmonic antipode. The proof relies on the calculations of the harmonic antipode $S_*$ and the shuffle antipode $S$. As a consequence, we prove that every $\psi$ in Racinet's double shuffle Lie algebra $\mathfrak{dmr}_0$ satisfies the relation $[\psi(x,y),x]+[\psi(-x-y,y),-x-y]=0$. We further prove that $\mathfrak{dmr}_0$ injects into the symmetric Kashiwara--Vergne Lie algebra $\mathfrak{krv}^{\mathrm{sym}}_2$ of Alekseev and Torossian.
\end{abstract}

\section{Introduction}
\subsection{Multiple zeta values (MZVs) and double shuffle Lie algebra}
Multiple zeta values are real numbers 
\begin{equation*}
    \zeta(s_1,\ldots,s_l)=\sum_{n_1>n_2>\ldots>n_l\ge 1}\frac{1}{n^{s_1}_1n^{s_2}_2\ldots n^{s_l}_l},
\end{equation*}
with $s_i\ge 1$ for $1\le i\le l$ and $s_1\ge 2$. These numbers generalize the special values of the Riemann zeta function at positive integers. In the case of $l=2$, they were already studied by Euler back to the eighteenth century. MZVs were rediscovered independently by D.~Zagier and J.~\'Ecalle. MZVs satisfy three distinguished set of relations, see for example \cite{IKZ}, namely the stuffle relation, shuffle relation and regularization relations, these relations have deep connections with the theory of motives \cite{Deligne,DG, Brown2012}. In the work \cite{Racinet2002}, G. Racinet introduced the Lie algebra $\mathfrak{dmr}_0$ and used it to study formally those three sets of relations. 

\subsection{Grothendieck--Teichm\"uller Lie algebra and a variant of one defining equation} The Grothendieck--Teichm\"uller Lie algebra $(\mathfrak{grt}_1)$ plays a central role in many branches of mathematics. The following equation
\begin{equation}\label{eq:hexgon}
   [\psi(A,B),B]+[\psi(A,C),C]=0,\quad \text{for}\quad A+B+C=0, 
\end{equation}
was introduced by V.~Drinfeld in the definition of the Grothendieck--Teichm\"uller Lie algebra $\mathfrak{grt}_1$ , see equation (5.19) in \cite{Drinfeld1991}, and by Y.~Ihara in the study of the symmetric outer derivation Lie algebra, see equation (2.2.2) in \cite{Ihara1992}. Using the skew-symmetry condition $\psi(A,B)=-\psi(B,A)$, one obtains a variant of equation \eqref{eq:hexgon}
\begin{equation}\label{eq:fhexagon}
    [\psi(B,A),B]+[\psi(C,A),C]=0,\quad \text{for}\quad A+B+C=0.
\end{equation}
It was proved by H.~Furusho \cite{Furusho10} that the pentagon equation implies the infinitesimal hexagon equation $\psi(C,A)+\psi(B,C)+\psi(A,B)=0$ for $A+B+C=0$ and skew-symmetry condition $\psi(A,B)=-\psi(B,A)$, combining with Proposition 5.7 in \cite{Drinfeld1991}, the pentagon equation imply the equation \eqref{eq:fhexagon}. H.~Furusho also proved that $\mathfrak{grt}_1\hookrightarrow \mathfrak{dmr}_0$ in \cite{Furusho2011}.

\subsection{Kashiwara--Vergne Lie algebra and the special derivation property} The Kashiwara--Vergne Lie algebra $\mathfrak{krv}_2$ was introduced in \cite{Alekseev2012} by A.~Alekseev and C.~Torossian in their study of Kashiwara--Vergne conjecture \cite{Alekseev2012}, they proved that $\mathfrak{grt}_1\hookrightarrow \mathfrak{krv}_2$. They introduce the tangential derivation Lie algebra $\mathfrak{tder}_2$ consisting of derivations of the free Lie algebra of two generators $\mathbb{L}(x,y)$ that sends $x\mapsto [x,a]$ and $y\mapsto [y,b]$, the special derivations $\mathfrak{sder}_2=\{(a,b)\in \mathfrak{tder}_2\mid [x,a]+[y,b]=0\}$. A related inertia condition is used by Enriquez--Furusho in the study of $\mathfrak{dmr}_0$, for $z=-x-y$ and a Lie series $\psi$, the inertia relation of $\psi$ is the existence of a companion $b_{\psi}$ satisfying $[x,\psi]+[z,b_{\psi}]=0$. The map $I$ in \eqref{map:EF} maps inertia condition to the special derivation condition. The following Theorem \ref{th:EFSH} is stated by J.~ \'Ecalle without proof using his mould theoretic language, see equation (3.64) in \cite{Ecalle}. In recent papers \cite{Schneps2025, kawamura2025, EF4}, the proofs are provided independently, we use the formulation in \cite{EF4}. 

\begin{Th}[{\cite[Corollary 0.31(c), Theorem 0.38]{EF4}}]\label{th:EFSH}
  For $\psi\in \mathfrak{dmr}_0$, there exists unique $b_{\psi}\in \mathbb{L}^{\ge 3}(x,y)$ such that 
\begin{equation}\label{eq:innertia}
[\psi(x,y),x]+[b_{\psi}(x,y),-x-y]=0,
\end{equation}
and the map $I$ is an injective graded Lie algebra morphism
\begin{equation}\label{map:EF}
    I: \mathfrak{dmr}_0\to \mathfrak{krv}_2, \psi(x,y)\mapsto (\psi(z,y)-b_{\psi}(z,y),\psi(z,y)).
\end{equation}
\end{Th}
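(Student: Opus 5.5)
The plan is to produce the companion $b_\psi$ explicitly rather than abstractly. The abstract announces that every $\psi\in\mathfrak{dmr}_0$ satisfies $[\psi(x,y),x]+[\psi(-x-y,y),-x-y]=0$, so I will aim to show that $b_\psi(x,y)=\psi(-x-y,y)$ works in \eqref{eq:innertia}. I would argue in three steps.

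\emph{Step 1: existence, via the antipode criterion.} For a Lie series $\psi$ with no terms of degree $<3$, I will prove that
\[
[\psi(x,y),x]+[\psi(-x-y,y),-x-y]=0 \quad\Longleftrightarrow\quad S_*(\psi_*)=-\psi_* .
\]
The method is to compute both antipodes on the relevant words:
\begin{itemize}
\item the shuffle antipode $S$ acts on a Lie element by $S(\psi)=-\psi$;
\item the harmonic antipode $S_*$ on $\psi_*=\pi_y(\text{regularized }\psi)$ should be expressible through the substitution $x\mapsto -x-y$, which is the automorphism exchanging the roles of $x$ and $z=-x-y$.
\end{itemize}
Write $\psi=\psi_x x+\psi_y y$ and compare the $y$-ending parts of both sides. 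The goal is to show that the $y$-ending part of $[\psi(x,y),x]+[\psi(z,y),z]$ corresponds, after regularization, exactly to $S_*(\psi_*)+\psi_*$.

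For $\psi\in\mathfrak{dmr}_0$, the element $\psi_*$ is primitive for the harmonic coproduct $\Delta_*$. The antipode of a primitive element is its negative, so $S_*(\psi_*)=-\psi_*$ holds automatically, and the relation follows.

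\emph{Step 2: uniqueness.} If $b,b'$ both satisfy \eqref{eq:innertia}, then $[b-b',z]=0$. In the free Lie algebra the centralizer of the generator-like element $z$ (which is primitive) is $\K z$. Since $b-b'$ has degree $\ge 3$, we get $b=b'$.

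\emph{Step 3: the map $I$.} With $b_\psi(x,y)=\psi(z,y)$, substitution gives $b_\psi(z,y)=\psi(x,y)$. Hence
\[
I(\psi)=\bigl(\psi(z,y)-\psi(x,y),\ \psi(z,y)\bigr).
\]
I then check the following properties in turn.
\begin{itemize}
\item \emph{Special derivation condition.} Substituting $x\mapsto z$ in \eqref{eq:innertia} and expanding $[x,a]+[y,b]$ with $z=-x-y$ should reduce it to \eqref{eq:innertia} together with $[x,\psi(x,y)]$ terms that cancel.
\item \emph{Divergence (trace) condition of $\mathfrak{krv}_2$.} Here I would use the cyclic-word formula for $\mathrm{div}$ and the characterization of $\mathfrak{dmr}_0$ through the shuffle and harmonic coproducts. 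The harmonic relation should force the divergence to be of the form $\mathrm{tr}(f(x)+f(y)-f(x+y))$.
\item \emph{Injectivity.} This is immediate: the second component of $I(\psi)$ is $\psi(z,y)$, which determines $\psi$.
\item \emph{Morphism property.} I would compare the Ihara bracket on $\mathfrak{dmr}_0$ with the commutator of tangential derivations, using that $I$ is essentially the conjugation of the Ihara derivation $D_\psi$ by the automorphism $x\leftrightarrow z$.
\end{itemize}

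The main obstacle I expect is Step 1, namely identifying $S_*$ on regularized Lie elements with the substitution $x\mapsto -x-y$, since the regularization map mixes the two coproducts. A secondary difficulty is the divergence condition in Step 3.
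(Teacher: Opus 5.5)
There is a genuine gap in Step~3: you give no mechanism for the divergence condition, and that is the hard part. For context, the paper does not prove this statement at all. It quotes it from \cite{EF4}, and its own contribution is only the explicit companion $b_\psi=\psi(z,y)$ (Theorem~\ref{th:double_shuffle}), obtained by exactly the route of your Step~1: harmonic primitivity gives $S_*(\psi_*)=-\psi_*$, which gives the relation.

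With $b_\psi=\psi(z,y)$, use $|xP_x+yP_y|=|P|$ together with $|\psi|=|\psi(z,y)|=0$. This gives $\mathrm{div}(I(\psi))=-|x\psi_x|=|\psi_y y|$. So what must be shown is that $|\psi_*|$ lies in the span of $|x^n|,|y^n|,|z^n|$ in each degree $n$. Nothing your Step~1 extracts from the harmonic side can give this:
\begin{enumerate}
\item By the paper, $S_*(\psi_*)=-\psi_*$ is equivalent to the relation $[\psi(x,y),x]+[\psi(z,y),z]=0$.
\item Its solutions correspond bijectively to the fixed points of the involution of $\mathfrak{sder}_2$ induced by $x\leftrightarrow z$.
\item As an $S_3$-module, the degree-$n$ part of $\mathfrak{sder}_2$ is $\mathbb{L}_n\otimes\mathbb{L}_1-\mathbb{L}_{n+1}$ in the representation ring, where $\mathbb{L}_n$ is the degree-$n$ part of $\mathbb{L}(x,y)$. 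For $n=5$ this is $\mathbf{1}\oplus\mathbb{L}_1$.
\item Hence the relation has a $2$-dimensional solution space in degree $5$, whereas $(\mathfrak{krv}_2)_5$ (like $\mathfrak{dmr}_0$ in degree $5$) is known to be $1$-dimensional.
\end{enumerate}
So the full primitivity of $\psi_*$ must enter in an essential way. That is the substance of the result of \cite{EF4} which the paper imports. ``The harmonic relation should force the divergence'' is the theorem itself, not a step toward it.

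Your Step~1 is also only a plan, and the correspondence you aim for is not exact. Set $H=[\psi(x,y),x]+[\psi(z,y),z]$. Then $H_y=xE-zB$, where $B$ is a further term and $E=\psi^y(z,y)-\psi_y(x,y)$ is what the antipode condition controls. Via $S_*\mathcal{G}(t)=\mathcal{G}(t)^{-1}$ with $\mathcal{G}(t)=(1-tx)^{-1}(1+tz)$, and $\mathrm{rev}(\psi)=(-1)^{n-1}\psi$, that condition says $E=\frac{1+(-1)^n}{n}(\psi\mid x^{n-1}y)\,y^{n-1}$. Comparing $y$-ending parts therefore only yields ``relation $\Rightarrow E=0$'', not the implication you need. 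The paper obtains that implication differently:
\begin{itemize}
\item It rewrites $H$ in the free generators $z,x$, where the coefficients of words beginning and ending in different letters are given by the coefficients of $E$.
\item It then uses Lemma~\ref{lem:mix_boundary} to kill the regularization constant in even degree.
\end{itemize}

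The rest of your proposal is sound.
\begin{itemize}
\item Your uniqueness argument is correct and independent of \cite{EF4}: the centralizer of $z$ in $\mathbb{L}(x,y)=\mathbb{L}(x,z)$ is $\K z$. What matters is that $z$ is a free generator, not that it is primitive.
\item The special-derivation check and injectivity go through.
\item The morphism property goes through via $\rho(I(\psi))=\varphi D_\psi\varphi^{-1}$, with $\varphi$ exchanging $x$ and $z$ and $D_\psi\colon x\mapsto 0,\ y\mapsto[y,\psi]$ the Ihara derivation.
\end{itemize}
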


The variables $x,y,z$ correspond to $e_0,e_1, e_{\infty}$ respectively, in their notation. The explicit form of $b_{\psi}$ is unknown.  The connection with the formulation of \cite{Ecalle} is explained in Appendix of \cite{Schneps2012} and the connection with \cite{kawamura2025,Schneps2025} is explained in   Subsection 16.1 in \cite{EF4}.

\subsection{Main results}

Our first result gives two equivalent characterizations of the property of being harmonic antipode anti invariant for a Lie series with no terms of degree less than 3.

\begin{Th}\label{th: harmonic_antipode}
    For every $\psi\in \mathbb{L}^{\ge 3}(x,y)$, the following two conditions are equivalent.
    \begin{enumerate}
        \item $S_*(\psi_*)=-\psi_*$.
        \item $[\psi(x,y),x]+[\psi(-x-y,y),-x-y]=0$.
    \end{enumerate}
\end{Th}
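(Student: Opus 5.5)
The plan is to translate both conditions into identities between noncommutative power series in $x,y$ ending in $y$, and then compare them. Throughout I use Racinet's conventions: $x=e_0$, $y=e_1$, $y_n=x^{n-1}y$, and $\pi_Y$ is the projection of $\mathbb{K}\langle\langle x,y\rangle\rangle$ onto $\mathbb{K}\langle\langle x,y\rangle\rangle y$, identified with $\mathbb{K}\langle\langle Y\rangle\rangle$ for $Y=\{y_n\}$. The regularization is
\[
\psi_*=\pi_Y(\psi)+\sum_{n\ge 1}\frac{(-1)^{n-1}}{n}\,(\psi\,|\,x^{n-1}y)\,y_1^{\,n}.
\]

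\textbf{Step 1: rewrite $S_*$ as a substitution.} Write a word in the $y_n$ as $x^{a_1}y\,x^{a_2}y\cdots x^{a_r}y$. The harmonic antipode is
\[
S_*(y_{n_1}\cdots y_{n_r})=(-1)^r\sum \text{(contractions of the reversed word)}.
\]
Contracting $y_iy_j\mapsto y_{i+j}$ in the $x,y$ alphabet amounts to replacing the letter $y$ separating two blocks by $x$. Each interior $y$ therefore either survives, contributing a factor $-1$ to the overall sign, or becomes $x$. This should give a closed formula: for $w=u\,y$,
\[
S_*(u\,y)=\bigl(\text{reverse of } u \text{ under } x\mapsto x,\ y\mapsto x-y\bigr)\cdot(-y),
\]
up to the exact placement of the signs. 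Comparing this with the shuffle antipode $S(w)=(-1)^{|w|}\overline{w}$, and using $S(\psi)=-\psi$ for a Lie series, I expect
\[
S_*(\pi_Y\psi)=\pi_Y\bigl(\text{linear substitution of } \psi\bigr),
\]
where the substitution is $x\mapsto -x-y$, $y\mapsto y$ modulo the sign twists. That is, $S_*(\pi_Y\psi)$ is essentially $\pi_Y(\psi(-x-y,y))$ with a correction. The correction comes from $\pi_Y$ not commuting with the substitution: words ending in $x$ in $\psi$ acquire $y$-endings after $x\mapsto -x-y$.

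\textbf{Step 2: rewrite condition (2) in right-$y$-components.} Decompose $\psi=\psi^x x+\psi^y y$ and $\psi(-x-y,y)=\tilde\psi^x x+\tilde\psi^y y$. For a Lie element of positive degree, $\psi$ is determined by $\psi^y$, and $\psi^x$ is recoverable from $\psi^y$ via $S(\psi)=-\psi$. Expanding
\[
[\psi,x]+[\psi(-x-y,y),-x-y]
\]
and extracting the coefficient series of words ending in $y$, resp.\ in $x$, reduces (2) to an identity of the form
\[
\psi^y\cdot(\text{stuff})=\text{expression in }\tilde\psi ,
\]
which, by injectivity of $\psi\mapsto\psi^y$, should be equivalent to a single relation between $\pi_Y\psi$ and $\pi_Y\psi(-x-y,y)$.

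\textbf{Step 3: match and handle regularization.} Finally I check that the relation from Step 2 coincides with
\[
S_*(\pi_Y\psi)+\pi_Y\psi=\text{(regularization terms)},
\]
with the $y_1^n$ correction terms of $\psi_*$ absorbing exactly the discrepancy from Step 1. Here the hypothesis $\psi\in\mathbb{L}^{\ge 3}$ enters: in degrees $1$ and $2$ the corrections do not match, for instance $\psi=x$ or $[x,y]$ fail. I would also use that $S_*(y_1^n)$ is explicit, since $y_1^n$ is a power of a primitive element for the harmonic coproduct modulo exponentials.

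\textbf{Main obstacle.} The main difficulty is Step 1 together with the bookkeeping in Step 3. One must get the exact signs in the substitution formula for $S_*$, and one must control how the terms $(\psi\,|\,x^{n-1}y)\,y_1^n$ interact with the words of $\psi$ ending in $x$ after the change of variables. My first attempt would be to verify the closed formula for $S_*$ on generating series of the form $\sum_w (\psi|w)\,w$, using the recursive definition $\sum S_*(w_{(1)})w_{(2)}=0$ with deconcatenation coproduct. I would then test the whole equivalence in degree $3$, where $\psi=c[x,[x,y]]+c'[y,[x,y]]$, to fix the signs before proving the general case.
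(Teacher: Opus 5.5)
Your plan has the right general shape: an explicit formula for $S_*$, the reversal symmetry of Lie elements, and a comparison of the parts ending in $y$. But Steps 2 and 3 contain no argument. They assert that both conditions ``should'' become one relation on words ending in $y$. Nothing forces the Lie identity (2) from such partial data, and that is where all the difficulty lies.

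Use the notation $P=xP^x+yP^y=P_xx+P_yy$, where superscripts record the \emph{first} letter, and set $n=\deg\psi$, $z=-x-y$. Then condition (1) is equivalent to
\[
\psi^y(z,y)-\psi_y(x,y)=\tfrac{1+(-1)^n}{n}\,(\psi\mid x^{n-1}y)\,y^{n-1}.
\]
To compare this with (2), the paper writes the left side $H$ of (2) in the alphabet $\{u,v\}=\{z,x\}$. In that alphabet $\psi(z,y)$ is just $\psi$ with $u$ and $v$ swapped. The coefficients of $H$ on the mixed-boundary words $vpu$ and $upv$ are then $-(E\mid p)$ and $-(E\mid\sigma(p))$, where $E$ is the left-hand side above rewritten in $u,v$ and $\sigma$ is the swap.

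Hence (2)$\Rightarrow$(1) follows once a depth-one computation kills $(\psi\mid x^{n-1}y)$ in even degree. But (1)$\Rightarrow$(2) needs two free-Lie facts that your plan never formulates:
\begin{enumerate}
\item A homogeneous Lie element of degree $\ge2$ all of whose words begin and end with the same letter vanishes. The paper proves this via the lexicographically smallest word and shuffle relations.
\item In even degree, (1) only says that all mixed-boundary coefficients of $H$ equal one constant $\lambda$, proportional to $(\psi\mid x^{n-1}y)$. Pairing $H$ with $u\shuffle v^{N-2}u$ and with $v\shuffle uv^{N-3}u$, and eliminating the coefficient of $uv^{N-2}u$, gives $-N(N-3)\lambda=0$ for $N=n+1$. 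This is exactly where $\deg\psi\ge3$ is used.
\end{enumerate}
So the $y^n$ terms of $\psi_*$ do not ``absorb a discrepancy.'' For odd $n$ they cancel in $S_*(\psi_*)+\psi_*$; for even $n$ their coefficient must be proved to vanish. Also, the injectivity you need concerns the bracket expression in (2), not $\psi$. It rests on primitivity, not on $S(\psi)=-\psi$ alone: $xyx$ satisfies the latter and has no word ending in $y$.

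Step 1 and your conventions are also wrong. ``$(-1)^r\sum$ contractions of the reversed word'' is Hoffman's antipode of the stuffle algebra, which is the graded dual of $(\mathbb{K}\langle Y\rangle,\conc,\Delta_*)$. The antipode you need is its transpose, which splits the letters $y_m$ instead of merging them. Your closed form gives $S_*(y_m)=-y_m$, which is false already for $m=2$.

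With the coproduct of the statement, $1-\sum_m Y_mt^m=(1-tx)^{-1}(1+tz)$ is group-like, so
\[
S_*(Y_{n_1}\cdots Y_{n_r})=(-1)^{n_1+\cdots+n_r}z^{n_r-1}y\cdots z^{n_1-1}y .
\]
Combined with $\mathrm{rev}(\psi)=(-1)^{n-1}\psi$, this gives $S_*(\pi_Y\psi)=-\psi^y(z,y)\,y$. That is built from the words of $\psi$ beginning with $y$, not from $\pi_Y(\psi(z,y))$.

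More seriously, if you really use Racinet's conventions, $\Delta_*(y_m)=\sum_{k=0}^m y_k\otimes y_{m-k}$ with $y_0=1$, then the statement you set out to prove is false. The map $w(x,y)\mapsto w(x,-y)$ is a Hopf isomorphism carrying the paper's regularization of $\psi$ to Racinet's regularization of $\psi(x,-y)$. So in Racinet's setting, (1) is equivalent to $[\psi,x]+[\psi(y-x,y),y-x]=0$. Concretely, in degree $3$, condition (2) holds exactly for multiples of $[x,[x,y]]+[y,[x,y]]$. With Racinet's $S_*$ and regularization, the anti-invariant elements are instead the multiples of $[x,[x,y]]-[y,[x,y]]$.

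You must work with $\Delta_*(Y_m)=Y_m\otimes1+1\otimes Y_m-\sum_{r+s=m}Y_r\otimes Y_s$. With that coproduct, the sign $(-1)^{n-1}$ in your regularization is harmless, because the even-degree correction is forced to vanish anyway.
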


The first characterization is from the Hopf algebra perspective and this is directly related to being harmonic coproduct primitive, namely $\Delta_*(\psi_*)=1\otimes \psi_*+\psi_*\otimes 1$ implies $S_*(\psi_*)=-\psi_*$. The second characterization is related to the inertia condition in $\mathfrak{dmr}_0$, the special derivation property in $\mathfrak{krv}_2$ and the equation \eqref{eq:fhexagon} in $\mathfrak{grt}_1.$ 

\begin{Rem}
    The equation $[\psi(x,y),x]+[\psi(-x-y,y),-x-y]=0$ is different from the equation \eqref{eq:hexgon}, it is the equation \eqref{eq:fhexagon}, we thank Hanamichi Kawamura for this remark.
\end{Rem}

The second result is an application of the first Theorem \ref{th: harmonic_antipode}.

\begin{Th}[{\cite[Corollary 1]{Schneps2025}}]\label{th:double_shuffle}
    For every $\psi\in \mathfrak{dmr}_0$, it satisfies the equation \[[\psi(x,y),x]+[\psi(-x-y,y),-x-y]=0.\]
\end{Th}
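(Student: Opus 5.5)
The plan is to deduce the statement directly from Theorem \ref{th: harmonic_antipode}. By that theorem it suffices to show that every $\psi\in\mathfrak{dmr}_0$ satisfies $S_*(\psi_*)=-\psi_*$, where $\psi_*$ is the regularized image of $\psi$ in the harmonic (stuffle) Hopf algebra. Note that elements of $\mathfrak{dmr}_0$ have no terms of degree less than $3$, by Racinet's definition (the degree-$1$ and degree-$2$ parts vanish, the latter by the $(x,y)$-coefficient condition). So $\psi\in\mathbb{L}^{\ge 3}(x,y)$ and the theorem applies.

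The key steps, in order, are as follows.

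\textbf{Step 1.} Recall Racinet's definition. An element $\psi\in\mathfrak{dmr}_0$ is a Lie series, so it is primitive for the shuffle coproduct. Its corrected image
\[\psi_*=\pi_y(\psi)+\sum_{n\ge 2}\frac{(-1)^{n-1}}{n}\,(\psi\mid x^{n-1}y)\,y_n,\]
written in the alphabet $\{y_n\}_{n\ge1}$, is primitive for the harmonic coproduct $\Delta_*$:
\[\Delta_*(\psi_*)=1\otimes\psi_*+\psi_*\otimes 1.\]

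\textbf{Step 2.} Use the standard Hopf algebra fact. In a connected graded Hopf algebra, the antipode satisfies $m\circ(S\otimes\id)\circ\Delta=\eta\circ\varepsilon$. For a primitive element $p$ with $\varepsilon(p)=0$, this gives $S(p)\cdot 1+S(1)\cdot p=0$, hence $S(p)=-p$. Apply this to the harmonic (stuffle) Hopf algebra, with antipode $S_*$ and the primitive element $\psi_*$. The element $\psi_*$ has no constant term because $\psi$ has degree at least $3$. We conclude $S_*(\psi_*)=-\psi_*$.

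\textbf{Step 3.} Invoke the implication (1)$\Rightarrow$(2) of Theorem \ref{th: harmonic_antipode}. This gives $[\psi(x,y),x]+[\psi(-x-y,y),-x-y]=0$.

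The only real obstacle is making sure the regularization $\psi_*$ used in Theorem \ref{th: harmonic_antipode} is exactly the one appearing in Racinet's $\Delta_*$-primitivity condition. The conventions to check are the sign and normalization of the correction term and the identification of $y_n$ with $x^{n-1}y$. If the paper's $\psi_*$ differs from Racinet's, I would first check that the two differ only in the correction term $\sum_n c_n y_n$. Each $y_n$ is individually $\Delta_*$-primitive, so it satisfies $S_*(y_n)=-y_n$. The anti-invariance $S_*(\psi_*)=-\psi_*$ therefore transfers between the two normalizations, and the argument goes through unchanged.
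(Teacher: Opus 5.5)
Your Steps 1--3 are exactly the paper's proof. In the paper, $\mathfrak{dmr}_0$ is defined as the set of $\psi\in\mathbb{L}^{\ge 3}(x,y)$ with $\Delta_*(\psi_*)$ primitive. The Hopf-algebra identity then gives $S_*(\psi_*)=-\psi_*$, and the implication (1)$\Rightarrow$(2) of Theorem~\ref{th: harmonic_antipode} concludes.

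Two side claims in your write-up are wrong, although the argument does not need them.

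First, the correction term of the regularization involves powers $y^n=Y_1^n$, not the letters $Y_n=x^{n-1}y$. With the sign convention of \eqref{eq:coproduct}, the paper's formula is $\psi_*=\pi_Y(\psi)+\sum_{n\ge1}\frac{(\psi\mid x^{n-1}y)}{n}y^n$.

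Second, your fallback paragraph rests on the false claim that every $Y_n$ is $\Delta_*$-primitive. By \eqref{eq:coproduct} only $Y_1$ is. Indeed $S_*(Y_2)=zy=-Y_2-Y_1^2\neq -Y_2$. Even the genuine correction terms only satisfy $S_*(y^n)=(-1)^n y^n$, so anti-invariance would not survive a change of correction term in even degree.

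The paper's definition of $\mathfrak{dmr}_0$ uses the same $\Delta_*$ and $(-)_*$ as Theorem~\ref{th: harmonic_antipode}. So no transfer between conventions is needed, and you should simply drop that paragraph.
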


This Theorem \ref{th:double_shuffle} provides an explicit companion $b_{\psi}=\psi(-x-y,y)$ in Theorem \ref{th:EFSH} by the uniqueness of $b_{\psi}$. It is a fundamental property of $\mathfrak{dmr}_0$, a variant of the  Theorem \ref{th:double_shuffle} appears in \cite[Corollary 1]{Schneps2025}, which is proved using mould theoretic language and literature. We use the standard Hopf algebra property to give an alternative proof here which are easier to generalize to other situations and conceptual clear.

As a consequence, we prove that $\mathfrak{dmr}_0$ not only injects into the Kashiwara--Vergne Lie algebra, it is contained in the symmetric Kashiwara--Vergne Lie algebra $\mathfrak{krv}^{\mathrm{sym}}_2$ by a different morphism. 

\begin{Th}\label{th:symmetr}
    The following map $J$ is an injective Lie algebra map
    \begin{equation*}
        J: \mathfrak{dmr}_0\to \mathfrak{krv}^{\mathrm{sym}}_2,\quad \psi(x,y)\mapsto (-\psi(x,-x-y),-\psi(y,-x-y)).
    \end{equation*}
\end{Th}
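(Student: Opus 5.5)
The plan is to deduce Theorem \ref{th:symmetr} from Theorem \ref{th:EFSH} together with Theorem \ref{th:double_shuffle}. The idea is to show that $J$ is the Enriquez--Furusho map $I$, corrected by an inner derivation and then transported by a linear change of variables of $\mathbb{L}(x,y)$. Throughout, $z=-x-y$.

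\textbf{Step 1: special derivation property.} By Theorem \ref{th:double_shuffle}, every $\psi\in\mathfrak{dmr}_0$ satisfies
\[
[\psi(A,B),A]+[\psi(C,B),C]=0\quad\text{whenever } A+B+C=0,
\]
since this identity is obtained from $[\psi(x,y),x]+[\psi(-x-y,y),-x-y]=0$ by substitution. Specializing to $(A,B,C)=(x,z,y)$ gives
\[
[x,-\psi(x,z)]+[y,-\psi(y,z)]=0.
\]
So $J(\psi)\in\mathfrak{sder}_2$. Its two components $a(x,y)=-\psi(x,z)$ and $b(x,y)=-\psi(y,z)$ satisfy $b(x,y)=a(y,x)$, because $z$ is symmetric in $x$ and $y$. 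This is the symmetry condition defining $\mathfrak{krv}^{\mathrm{sym}}_2$, provided $J(\psi)\in\mathfrak{krv}_2$.

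\textbf{Step 2: relation with $I$.} By uniqueness in Theorem \ref{th:EFSH}, $b_\psi=\psi(-x-y,y)$, so $b_\psi(z,y)=\psi(x,y)$ and
\[
I(\psi)=(\psi(z,y)-\psi(x,y),\,\psi(z,y)).
\]
Subtracting the inner tangential derivation $(c,c)$ with $c=\psi(z,y)$ leaves the derivation $D$ with $x\mapsto[x,-\psi(x,y)]$ and $y\mapsto 0$. By the identity of Step 1 it also satisfies $z\mapsto[z,-\psi(z,y)]$. Let $\tau$ be the automorphism of $\mathbb{L}(x,y)$ with $x\mapsto x$ and $y\mapsto z$, so that $z\mapsto y$. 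Then $\tau D\tau^{-1}$ sends $x\mapsto[x,-\psi(x,z)]$ and $y\mapsto[y,-\psi(y,z)]$. Hence
\[
J(\psi)=\tau\circ\bigl(I(\psi)-(\psi(z,y),\psi(z,y))\bigr)\circ\tau^{-1}.
\]

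\textbf{Step 3: divergence condition (main obstacle).} One must check that $J(\psi)$ satisfies the Alekseev--Torossian divergence condition. I would handle this with the cocycle/transport property of the divergence. First, compute the divergence of the inner derivation $(c,c)$ explicitly as a cyclic word; it is expressible through $\mathrm{tr}$ of a function of $x+y=-z$. Second, use that $\tau$ is linear on generators, so conjugation by $\tau$ transforms $\mathrm{div}$ by the induced substitution on cyclic words with no Jacobian correction. Third, rewrite the condition $\mathrm{div}\,I(\psi)=\mathrm{tr}(f(x+y)-f(x)-f(y))$ after the substitution $y\mapsto z$ and the inner correction, and check that it again takes the form $\mathrm{tr}(g(x+y)-g(x)-g(y))$. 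I expect bookkeeping of the $\mathrm{tr}(f(\cdot))$ terms under $x+y\leftrightarrow -y$ to be the delicate point. If it fails, the fallback is to verify the divergence condition for $J(\psi)$ directly, using the known relation between $\mathfrak{dmr}_0$ and the divergence, namely via $S$-invariance of $\psi_*$ as in Theorem \ref{th: harmonic_antipode}.

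\textbf{Step 4: Lie morphism and injectivity.} Conjugation by $\tau$ is a Lie automorphism of $\mathfrak{tder}_2$. The map $\psi\mapsto I(\psi)-\mathrm{ad}$-correction agrees with $I$ modulo inner derivations. On $\mathfrak{sder}_2$, the components are uniquely determined by the action on $x$ and $y$ together with the normalization of no linear terms. So the bracket of $J$-images equals $J$ of the bracket, because $I$ is a Lie map by Theorem \ref{th:EFSH}. Alternatively, I would check the bracket compatibility directly against the Ihara bracket. Injectivity is immediate: $\psi(x,z)$ determines $\psi(x,y)$ by the invertible substitution $y\mapsto -x-y$.
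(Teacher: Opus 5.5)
The genuine gap is Step 3. It is the entire content of $J(\psi)\in\mathfrak{krv}_2$ beyond $\mathfrak{sder}_2$, and you leave it as a plan with a fallback.

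The rest matches the paper. Conjugating $I(\psi)-(c,c)$ by your $\tau$ (the paper's $\chi$) is exactly the paper's involution $\Theta^{\chi}$, and your Step 2 identity is Lemma~\ref{lem:inv_2}(1). Getting specialness and $\Theta^{\tau}$-symmetry of $J(\psi)$ directly from Theorem~\ref{th:double_shuffle} with $(A,B,C)=(x,z,y)$ is a nice shortcut.

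In Step 4, ``agrees with $I$ modulo inner derivations'' does not by itself give bracket compatibility. The real reason is that $(a,b)\mapsto\rho(a,b)+\ad_b=\rho(a-b,0)$ is a Lie morphism $\mathfrak{tder}_2\to\mathrm{Der}$, because second components bracket by the $\beta_3$ formula. Also, conjugation by $\tau$ is an automorphism of $\mathrm{Der}$, not of $\mathfrak{tder}_2$.

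Step 3 is missing two things. \emph{First}, the transformation rule for $\mathrm{div}$ is only asserted. ``$\tau$ is linear, so no Jacobian correction'' is not a proof, because $\mathrm{div}$ depends on the tangential presentation relative to a chosen generating pair. Your $D$ is $(-\psi(x,y),0)$ relative to $(x,y)$ but $(-\psi(x,y),-\psi(z,y))$ relative to $(x,z)$. Comparing the two divergences, using $b=xa_y+yb_y$ for special pairs, gives a defect $-|\psi(z,y)|$. Adding $\mathrm{div}(c,c)=|c|$ for the inner part (which is simply $0$, not a function of $x+y$ to be tracked) gives exactly the paper's Proposition~\ref{prop:changevariable}: $\mathrm{div}(\Theta^{\chi}(a,b))-\chi(\mathrm{div}(a,b))=-2|\chi(b)|$. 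This vanishes only because $b$ is a Lie element of degree $\ge 3$.

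\emph{Second}, the point you flag as delicate is a real obstruction, not bookkeeping. With the Alekseev--Torossian normalization, the substitution turns $\mathrm{tr}(f(x+y)-f(x)-f(y))$ into $\mathrm{tr}(f(-y)-f(x)-f(z))$. Since $|x^n|,|y^n|,|z^n|$ are linearly independent for $n\ge 2$, this has the form $\mathrm{tr}(g(x+y)-g(x)-g(y))$ if and only if $f$ is odd (and then $g=f$). You give no argument for oddness, and the proposed fallback via $S_*$-invariance of $\psi_*$ is not an argument.

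The paper avoids this by using the criterion of Proposition~\ref{prop:krv_ref}, with three independent series $f_0,f_1,f_2$, which is visibly stable under $y\leftrightarrow z$. If you want to keep a single $f$, you can force oddness. Write $a'=-\psi(x,z)$ for the first component of $J(\psi)$. The specialization $y\mapsto 0$ is well defined $|A|\to\K[[x]]$. It kills $\mathrm{div}(J(\psi))$, since $a'_x(x,0)\,x=a'(x,0)=-\psi(x,-x)=0$. It sends $\mathrm{tr}(f(-y)-f(x)-f(z))$ to $f(0)-f(x)-f(-x)$, so $f$ is odd.
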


$\mathfrak{krv}_2$ is a graded Lie algebra, we denote by $(\mathfrak{krv}_2)_{\ge 3}$ the degree equal or bigger than $3$ part of $(\mathfrak{krv}_2)_{\ge 3}$. We first show that the symmetric group $S_3$ acts on $(\mathfrak{krv}_2)_{\ge 3}$ in Theorem \ref{th:S_3KV}. Then the proof of this theorem is by noticing that the image of $I$ with the explicit companion $b_{\psi}$ provided by the Theorem \ref{th:double_shuffle} is fixed by the automorphism on $(\mathfrak{krv}_2)_{\ge 3}$ induced by the $x$ and $z$ permutation. The symmetric Kashiwara--Vergne Lie algebra is the fixed point of the automorphism on $(\mathfrak{krv}_2)_{\ge 3}$ induced by $x$ and $y$ permutation, therefore it suffices to apply the automorphism on $(\mathfrak{krv}_2)_{\ge 3}$ induced by $y$ and $z$  to the image of $I$, the composition of the automorphism with $I$ is $J$.

\begin{Rem}
We mention some works that are related.

In Section 6.5 of \cite{Brown_depth}, F.~Brown shows that the shuffle and harmonic antipodes build up dihedral symmetry in each part of the depth-graded version of $\mathfrak{dmr}_0$. Taking the leading depth element of the formula in Proposition \ref{Prop:formula} and passing to the depth graded setting, the cyclic invariance immediately following the equation (6.13) in  \cite{Brown_depth} for the depth graded $\mathfrak{dmr}_0$.

The author is informed that related questions concerning the harmonic antipode and the companion in Theorem \ref{th:EFSH} are being studied independently by Hanamichi Kawamura and Takumi Maesaka in their forthcoming paper.
\end{Rem}

{\bf Organization of the paper.} This text has three sections after the introduction. In the first section \ref{sec:preliminary}, we recall the basic notations and study some properties of shuffle antipode and harmonic antipode. In the second section \ref{sec:proof_of_first}, we prove Theorem \ref{th: harmonic_antipode}. In the third section \ref{sec:dmr_krv}, we recall the definitions and properties of $\mathfrak{dmr}_0$ and $\mathfrak{krv}_2$. We first prove Theorem \ref{th:double_shuffle}, and then establish the $S_3$ action of $(\mathfrak{krv}_2)_{\ge 3}$, after that we prove the Theorem \ref{th:symmetr}.

\vspace{0.5cm}

{\bf Acknowledgements.}   The author thanks Benjamin Enriquez and Hidekazu Furusho for explaining their work, helpful comments and suggestions. The author also thanks Anton Alekseev, Guillaume Laplante-Anfossi, Leila Schneps and Thomas Willwacher for support and help during the preparation of this work. The author also thanks Hanamichi Kawamura and Takumi Maesaka for helpful discussions and for sharing their perspective. The author is supported by the SNSF postdoc mobility grant  $P500PT\_230340$.

\section{Preliminary}\label{sec:preliminary}
We fix a field $\mathbb{K}$ of characteristic 0. Let $A$ denote the Hopf algebra of formal noncommutative polynomials $A := 
    \big( \mathbb{K}\langle\langle x,y\rangle\rangle, \conc, \Delta_\shuffle, \varepsilon,S \big),$ where $\conc$ is the concatenation product and
\begin{equation*}
    \begin{split}
        & \Delta_\shuffle(x) = x \otimes 1 + 1 \otimes x, \quad \varepsilon(x) = 0, \quad S(x)=-x, \\
        & \Delta_\shuffle(y) = y\otimes 1 + 1 \otimes y, \quad \varepsilon(y) = 0, \quad S(y)=-y, \\
    \end{split}
\end{equation*}
where $S$ is the shuffle antipode. $A$ is graded, the grading of $A$ is defined by $\deg(x)=1$ and $\deg(y)=1$. Let $\mathbb{L}(x,y)$ denote the completed graded free Lie subalgebra of $A$ generated by $x,y$ and $\mathbb{L}^{\ge n}$ denotes the part that has no terms of degree lower than $n$.

For a word $w$ and series $P$ in $A$, we denote $(P|w)$ for the coefficient of $w$ in $P$, for every $P$ in $A$, it has the following decompositions
\begin{equation*}   P=\varepsilon(P)+P_xx+P_yy=\varepsilon(P)+xP^x+yP^y,
\end{equation*}
where $P_x, P_y,P^x,P^y$ denote the remaining parts of $P$ that ending in x, y and beginning in $x,y$.

Let $Y_i, i\ge1, i\in \Z$ be the free generators of degree $i$ and Let $\mathbb{K}\langle Y\rangle$ denote the free algebra generated by them, it is equipped with the bialgebra structure, the product is the concatenation, the harmonic coproduct is the algebra morphism determined by
\begin{equation}\label{eq:coproduct}
    \Delta_{*}(Y_m)=Y_m\otimes 1+1\otimes Y_m-\sum_{r+s=m,r,s\ge 1}Y_r\otimes Y_s,
\end{equation}
the counit $\varepsilon_*$ is defined to be $\varepsilon_*(1)=1$ and $\varepsilon_*(Y_i)=0$, then $(\mathbb{K}\langle Y\rangle,\conc, \Delta_*,\varepsilon_*)$ is a connected $\mathbb{N}$ graded bialgebra. By the Cartier-Quillen-Milnor-Moore theorem \cite{CMM}, it is a Hopf algebra with antipode $S_*$. Let $\mathbb{K}\langle\langle Y\rangle \rangle$ denote the degree completion of $\mathbb{K}\langle Y\rangle$. The map that sends $Y_m$ to $x^{m-1}y$ induces an injective algebra map from $\mathbb{K}\langle\langle Y\rangle\rangle$ to $A$, the image is $\mathcal{W}:=\mathbb{K}\oplus Ay$. In what follows, we use the identification of $\mathbb{K}\langle\langle Y\rangle\rangle$ with its image $\mathcal{W}$ in $A$, let $\pi_Y:A\to \mathcal{W}$ denote the projection, $\pi_Y(P):=\varepsilon(P)+P_yy$.

For any $\alpha\in A$, we define the regularization map,
\begin{equation}\label{eq:regulirization}
 (-)_{*}: A\to \K\langle\langle Y\rangle\rangle; \alpha\mapsto (\alpha)_*:=\pi_Y(\alpha)+\sum_{n\ge 1}\frac{(\alpha\mid x^{n-1}y)}{n}y^n. 
\end{equation}

\begin{Rem}
The sign convention of the coproduct $\Delta_*$ \eqref{eq:coproduct} and $(-)_*$ \eqref{eq:regulirization} are consistent with the ones in Section 0.3 in \cite{EF4}, the notation $e_0,e_1$ in \cite{EF4} corresponds to our $x,y$.    
\end{Rem}

\subsection{Properties of harmonic antipode}
In this subsection, we study the harmonic antipode $S_*$ and the equality $S_*(\psi_*)=-\psi_*.$  Let us first introduce the map
\begin{equation}
    Q:=(-)_*\circ S-S_*\circ (-)_*,
\end{equation}
which measures the failure of the regularization map to intertwine the shuffle and harmonic antipode.

\begin{Lemma}\label{lem:Q_equivalence}
    Suppose $\psi\in \mathbb{L}^{\ge 3}(x,y)$, 
    \[
    \psi\in\ker Q
\quad\Longleftrightarrow\quad
    S_*(\psi_*)=-\psi_*.
\]
\end{Lemma}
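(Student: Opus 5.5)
The statement reduces to the identity $(S(\psi))_*=-\psi_*$ for Lie series $\psi$. Once that holds, we have
\[
Q(\psi)=(S(\psi))_*-S_*(\psi_*)=-\psi_*-S_*(\psi_*),
\]
so $Q(\psi)=0$ exactly when $S_*(\psi_*)=-\psi_*$, which is the claimed equivalence.

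First we recall that the shuffle antipode $S$ is the anti-automorphism of $(A,\conc)$ with $S(x)=-x$ and $S(y)=-y$. On a word $w=a_1\cdots a_n$ it therefore acts by $S(w)=(-1)^n a_n\cdots a_1$. Since $x$ and $y$ are primitive for $\Delta_\shuffle$, so is every element of $\mathbb{L}(x,y)$. For a primitive element $\psi$ the antipode axiom $\conc\circ(S\otimes\id)\circ\Delta_\shuffle=\eta\circ\varepsilon$ gives $S(\psi)+\psi=\varepsilon(\psi)=0$. Alternatively, one can argue directly: $S$ is an anti-homomorphism sending generators to their negatives, so $S([a,b])=[S(b),S(a)]=-[S(a),S(b)]$, and by induction on bracket length $S=-\id$ on Lie polynomials. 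Hence $S(\psi)=-\psi$ for $\psi\in\mathbb{L}^{\ge3}(x,y)$, and this passes to the completion degree by degree.

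Next we use that the regularization map $(-)_*$ is $\K$-linear. This is clear from \eqref{eq:regulirization}: both $\pi_Y$ and the coefficient extraction $\alpha\mapsto(\alpha\mid x^{n-1}y)$ are linear. It follows that $(S(\psi))_*=(-\psi)_*=-\psi_*$, which is the identity needed above.

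No step is a real obstacle. The only points to check are that $S$ acts as $-\id$ on the completed Lie algebra and that $(-)_*$ is linear. The hypothesis $\psi\in\mathbb{L}^{\ge3}$ is not needed for this lemma beyond $\psi$ being a Lie series; it matters only later, in the rest of the paper.
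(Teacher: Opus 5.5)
Your proposal is correct and follows essentially the same argument as the paper: use $S(\psi)=-\psi$ for Lie series, so that $Q(\psi)=-\psi_*-S_*(\psi_*)$, and the equivalence is immediate. Your added justifications (primitivity giving $S=-\id$ on $\mathbb{L}(x,y)$, and linearity of $(-)_*$) are the details the paper leaves implicit. You are also right that the degree $\ge 3$ hypothesis plays no role here.
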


\begin{proof}
    For $\psi\in \mathbb{L}(x,y)$, we have the relation $S(\psi)=-\psi$. Suppose $S_*(\psi_*)=-\psi_*$, then $Q(\psi)=S(\psi)_*-S_*(\psi_*)=-\psi_*-(-\psi_*)=0.$ For another direction, suppose that $\psi\in \ker Q$, then $S_*(\psi_*)+\psi_*=0$.
\end{proof}

In the following, we compute the defect $Q$ explicitly.

\begin{Prop}
We denote by $z=-x-y$.
\begin{enumerate}
    \item  For every $m\ge 1$, we have
    \begin{equation*}
        S_*(Y_m)=(-1)^m z^{m-1}y.
    \end{equation*}
    \item For every $n_1,n_2,\ldots,n_r\ge 1$, we have
    \begin{equation}\label{eq:antipode formula}
        S_*(Y_{n_1}\ldots Y_{n_r})=(-1)^{n_1+\ldots+n_r}z^{n_r-1}y\ldots z^{n_1-1}y
    \end{equation}
\end{enumerate}
   
\end{Prop}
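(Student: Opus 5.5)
We prove (1) by induction on $m$ from the recursive definition of the antipode, then deduce (2) from the fact that $S_*$ is an algebra anti-morphism.

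\textbf{Part (1).} The antipode of the connected graded bialgebra $\mathbb{K}\langle Y\rangle$ satisfies $\conc\circ(S_*\otimes \id)\circ\Delta_*=\eta\circ\varepsilon_*$. Applying this to $Y_m$ with the coproduct \eqref{eq:coproduct} gives
\begin{equation*}
S_*(Y_m)=-Y_m+\sum_{r+s=m,\ r,s\ge 1}S_*(Y_r)\,Y_s .
\end{equation*}
We argue by induction on $m$, inside $A$, using the identification $Y_s=x^{s-1}y$. The base case is $S_*(Y_1)=-Y_1=-y$, which agrees with $(-1)^1z^0y$. Assume the formula holds for all $r<m$. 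Then
\begin{equation*}
S_*(Y_m)=-x^{m-1}y+\sum_{r=1}^{m-1}(-1)^r z^{r-1}y\,x^{m-r-1}y .
\end{equation*}
Put $u=-z=x+y$, so $(-1)^r z^{r-1}=-u^{r-1}$. The right-hand side becomes
\begin{equation*}
-\Big(x^{m-1}+\sum_{r=1}^{m-1}u^{r-1}y\,x^{m-1-r}\Big)y .
\end{equation*}
Since $y=u-x$, the sum telescopes:
\begin{equation*}
\sum_{r=1}^{m-1}u^{r-1}(u-x)x^{m-1-r}=\sum_{r=1}^{m-1}\big(u^r x^{m-1-r}-u^{r-1}x^{m-r}\big)=u^{m-1}-x^{m-1}.
\end{equation*}
Hence $S_*(Y_m)=-u^{m-1}y=(-1)^m z^{m-1}y$, which completes the induction. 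The only delicate point is this telescoping: one must rewrite $y$ as $u-x$ in the middle factor, and keep track of the fact that the computation happens in $A$, not in $\mathbb{K}\langle Y\rangle$.

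\textbf{Part (2).} The antipode of a Hopf algebra is an algebra anti-homomorphism, so
\begin{equation*}
S_*(Y_{n_1}\cdots Y_{n_r})=S_*(Y_{n_r})\cdots S_*(Y_{n_1}).
\end{equation*}
Substituting part (1) for each factor and collecting the signs gives \eqref{eq:antipode formula}. The resulting product lies in $Ay\subset\mathcal W$, so the identification with $\mathbb{K}\langle\langle Y\rangle\rangle$ is consistent. Since $z^{k-1}y=(-1)^{k-1}(x+y)^{k-1}y$ expands into words ending in $y$, the expression is also visibly an element of $\mathcal{W}$.
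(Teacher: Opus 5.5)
Your proof is correct. The recursion obtained from $\conc\circ(S_*\otimes\id)\circ\Delta_*=\eta\circ\varepsilon_*$, the sign identity $(-1)^rz^{r-1}=-u^{r-1}$ and the telescoping sum all check out. Part (2) is the same anti-multiplicativity step the paper uses.

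For part (1) the paper argues differently in form, though not really in substance. It introduces $\mathcal{G}(t)=1-\sum_{m\ge 1}Y_mt^m$ and notes from the shape of $\Delta_*$ that it is group-like. It then factors it in $A$ as $\mathcal{G}(t)=(1-tx)^{-1}(1+tz)$ and reads off $S_*(Y_m)$ from $S_*(\mathcal{G}(t))=\mathcal{G}(t)^{-1}=(1+tz)^{-1}(1-tx)$.

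Your recursion $S_*(Y_m)=-Y_m+\sum_{r+s=m}S_*(Y_r)Y_s$ is exactly the coefficient of $t^m$ in $S_*(\mathcal{G}(t))\,\mathcal{G}(t)=1$. Your telescoping identity is the coefficient of $t^{m-1}$ in the resolvent identity $(1-tu)^{-1}-(1-tx)^{-1}=t(1-tu)^{-1}(u-x)(1-tx)^{-1}$, where $1+tz=1-tu$.

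The paper's packaging derives the formula rather than verifying a known ansatz. The factorization of $\mathcal{G}(t)$ also explains where $z=-x-y$ comes from. Your version stays within finitely many polynomial identities in $\mathbb{K}\langle Y\rangle$. It therefore never needs to extend $S_*$ to $t$-adic series, or to use that the antipode inverts group-like elements for a completed tensor product.
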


\begin{proof}
    We introduce the formal power series
    \begin{equation*}
        \mathcal{G}(t)=1-\sum_{m\ge 1}Y_mt^m,
    \end{equation*}
    by the coproduct formula, we have
    \begin{equation*}
        \Delta_*(\mathcal{G}(t))=\mathcal{G}(t)\widehat{\otimes} \mathcal{G}(t),
    \end{equation*}
    therefore $\mathcal{G}(t)$ is group like, we have
    \begin{equation*}
        \mathcal{G}(t)=1-t(1-tx)^{-1}y=(1-tx)^{-1}(1-tx-ty)=(1-tx)^{-1}(1+tz),
    \end{equation*}
    therefore we have
    \begin{equation}\label{eq:antipode-formula_1_proof}
        \mathcal{G}(t)^{-1}=(1+tz)^{-1}(1-tx)=1+\sum_{m\ge 1}(-1)^{m+1}z^{m-1}yt^m
    \end{equation}
and since the antipode sends a group-like element to its inverse
\begin{equation}\label{eq:antipode-formula_2_proof}
    S_*(\mathcal{G}(t))=1-\sum_{m\ge 1}S_*(Y_m)t^m=\mathcal{G}(t)^{-1},
\end{equation}
by comparing the coeffient of the equation \eqref{eq:antipode-formula_1_proof} and \eqref{eq:antipode-formula_2_proof}, we derive the formula. The second formula is because the antipode is an anti-algebra map.
\end{proof}

Let $\mathrm{rev}$ be the linear operation on $A$ that reverse the order of the words in each polynomial.

\begin{Lemma}\label{lem: reverse}
    If $a\in \mathbb{L}(x,y)$ is homogeneous of degree $n$, then
    \begin{enumerate}
        \item $\mathrm{rev}(a)=(-1)^{n-1}a$\\
        \item $(a\mid u\shuffle v)=0$, for any $u,v\in A$ non empty of degree $\ge 1$.
    \end{enumerate}
\end{Lemma}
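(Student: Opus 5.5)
Both parts follow from the Hopf-algebraic description of $\mathbb{L}(x,y)$ inside $A$, namely as the space of primitive elements for $\Delta_\shuffle$. Since $\mathrm{rev}$ and the pairing are linear and every Lie series is a (convergent) linear combination of Lie monomials, it suffices to check each statement on brackets $[a_1,[a_2,\ldots,[a_{n-1},a_n]\ldots]]$ with $a_i\in\{x,y\}$.

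For (1), I will use that $\mathrm{rev}$ is an algebra anti-automorphism of $(A,\conc)$ fixing $x$ and $y$. Hence $\mathrm{rev}([u,v])=\mathrm{rev}(v)\mathrm{rev}(u)-\mathrm{rev}(u)\mathrm{rev}(v)=-[\mathrm{rev}(u),\mathrm{rev}(v)]$. Induction on $n$ then gives $\mathrm{rev}(a)=(-1)^{n-1}a$: for $n=1$ there is nothing to prove, and a bracket of degree $n$ built from pieces of degrees $p+q=n$ picks up the sign $-(-1)^{p-1}(-1)^{q-1}=(-1)^{n-1}$. An alternative route is to note that $S=(-1)^{\deg}\circ\mathrm{rev}$ on $A$ (because $S(x)=-x$, $S(y)=-y$ and $S$ is an antimorphism), and that $S(a)=-a$ for primitive $a$. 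I would mention this second route since it links to the shuffle antipode $S$ used later.

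For (2), I will use the duality between $\Delta_\shuffle$ and the shuffle product: for words $u,v$ and any $P\in A$, $(P\mid u\shuffle v)=(\Delta_\shuffle(P)\mid u\otimes v)$, where the pairing is extended to $A\widehat\otimes A$ in the natural way. This is the standard fact that the shuffle product is the graded dual of the deconcatenation-free coproduct $\Delta_\shuffle$. It can be checked on words, since $\Delta_\shuffle(w)=\sum_{I}w_I\otimes w_{I^c}$ summed over subsets $I$ of positions, which is exactly the coefficient expansion of shuffles. Because $a\in\mathbb{L}(x,y)$ is primitive, $\Delta_\shuffle(a)=a\otimes1+1\otimes a$. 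Pairing with $u\otimes v$ for nonempty $u,v$ therefore gives $(a\mid u)\varepsilon(v)+\varepsilon(u)(a\mid v)=0$, and bilinearity extends this to arbitrary $u,v$ without constant term.

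The only point needing care is the duality identity $(P\mid u\shuffle v)=(\Delta_\shuffle P\mid u\otimes v)$, including its normalization with multiplicities of words in $u\shuffle v$. I would verify it directly on a word $P=w$ by counting, for each decomposition of the positions of $w$ into two complementary subsequences, the matching terms on both sides.
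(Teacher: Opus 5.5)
Your proposal is correct. For part (2) you follow the paper's argument: elements of $\mathbb{L}(x,y)$ are primitive for $\Delta_\shuffle$. You also make explicit the duality $(P\mid u\shuffle v)=(\Delta_\shuffle(P)\mid u\otimes v)$, which the paper's one-line proof (``follows from $a$ primitive'') leaves implicit.

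For part (1), your main argument differs from the paper's. You use that $\mathrm{rev}$ is an anti-automorphism of $(A,\conc)$ fixing $x$ and $y$, so $\mathrm{rev}([u,v])=-[\mathrm{rev}(u),\mathrm{rev}(v)]$, and then induct over iterated brackets. The paper uses only what you list as your alternative route: $S(a)=-a$ for Lie elements, and $S=(-1)^n\,\mathrm{rev}$ on degree-$n$ elements.

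Your induction is entirely elementary. It needs nothing beyond the fact that $\mathbb{L}(x,y)$ is spanned by brackets of the generators, and no Hopf-algebra input at all. The paper's route is a one-liner instead, and it lets a single fact, primitivity of Lie elements (equivalently $S(a)=-a$), drive both parts of the lemma. That same fact $S(\psi)=-\psi$ is used again immediately afterwards in the paper, in the analysis of the defect $Q=(-)_*\circ S-S_*\circ(-)_*$.
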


\begin{proof}
For the first claim, as $a\in \mathbb{L}(x,y)$, we have $S(a)=-a$. For a homogeneous word of degree $n$, by definition of $S$, we have $S(a)=(-1)^n\mathrm{rev}(a)$, therefore we have $(-1)^n\mathrm{rev}(a)=-a$.

The second one follows from $a\in \mathbb{L}(x,y)$ is primitive.

\end{proof}

\begin{Prop}Let $a\in \mathbb{L}(x,y)$ be homogeneous of degree  $n\ge 2$, then
\begin{equation}\label{eq:formula-piy}
    S_*(\pi_Y(a))=-a^y(z,y)y.
\end{equation}
    
\end{Prop}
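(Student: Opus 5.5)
The plan is to compute $S_*(\pi_Y(a))$ word by word using formula \eqref{eq:antipode formula}. I then recognize the result as a reversal of $a$ followed by the substitution $x\mapsto z$, and finish with Lemma~\ref{lem: reverse}(1). Since $a$ is a Lie element of degree $n\ge 2$, we have $\varepsilon(a)=0$, so $\pi_Y(a)=a_y y$. This is a linear combination of words of degree $n$ ending in $y$, and every such word has the form
\[
w=x^{k_1-1}y\,x^{k_2-1}y\cdots x^{k_r-1}y=Y_{k_1}\cdots Y_{k_r},\qquad k_1+\cdots+k_r=n .
\]

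\textbf{Step 1 (one word).} By \eqref{eq:antipode formula},
\[
S_*(w)=(-1)^{n}\,z^{k_r-1}y\cdots z^{k_1-1}y .
\]
Now compare with the reversed word
\[
\mathrm{rev}(w)=y\,x^{k_r-1}y\,x^{k_{r-1}-1}\cdots y\,x^{k_1-1}.
\]
It begins with $y$, so we may write $\mathrm{rev}(w)=y\,u_w$ with $u_w=x^{k_r-1}y\cdots y\,x^{k_1-1}$. Then
\[
z^{k_r-1}y\cdots z^{k_1-1}y=u_w(z,y)\,y .
\]
That is, $S_*(w)=(-1)^n\,(\mathrm{rev}(w))^y(z,y)\,y$, where $(\cdot)^y$ is the decomposition $P=\varepsilon(P)+xP^x+yP^y$ from the preliminaries.

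\textbf{Step 2 (linearity).} Both $P\mapsto P^y$ and the substitution $x\mapsto z$ are linear. Summing Step 1 over the words of $a_y y$ therefore gives
\[
S_*(\pi_Y(a))=(-1)^n\,\big(\mathrm{rev}(a_y y)\big)^y(z,y)\,y .
\]
Reversal maps the words of $a$ that end in $y$ bijectively onto the words of $\mathrm{rev}(a)$ that begin with $y$, so $\mathrm{rev}(a_y y)=y\,(\mathrm{rev}(a))^y$. Hence
\[
(\mathrm{rev}(a_y y))^y=(\mathrm{rev}\,a)^y .
\]

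\textbf{Step 3 (Lie property).} Lemma~\ref{lem: reverse}(1) gives $\mathrm{rev}(a)=(-1)^{n-1}a$, so $(\mathrm{rev}\,a)^y=(-1)^{n-1}a^y$. Substituting into Step 2,
\[
S_*(\pi_Y(a))=(-1)^n(-1)^{n-1}a^y(z,y)\,y=-a^y(z,y)\,y,
\]
which is \eqref{eq:formula-piy}.

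The only delicate point is the bookkeeping in Step 1. Reversing $w$ moves the trailing $y$ to the front, and this is exactly what converts "ending in $y$" into "beginning with $y$", so that $a_y$ turns into $a^y$. The sign exponents must also be checked: the factor $(-1)^{\sum k_i}=(-1)^n$ from \eqref{eq:antipode formula} and the factor $(-1)^{n-1}$ from Lemma~\ref{lem: reverse}(1) combine to the required $-1$. Everything else is linear and formal.
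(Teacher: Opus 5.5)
Your proof is correct and follows the paper's argument essentially verbatim: for each word $w$ ending in $y$ you write $\mathrm{rev}(w)=y\,p$, use \eqref{eq:antipode formula} to get $S_*(w)=(-1)^n p(z,y)\,y$, and combine the sign $(-1)^n$ with $\mathrm{rev}(a)=(-1)^{n-1}a$ from Lemma~\ref{lem: reverse}(1). The only difference is presentational. The paper tracks individual coefficients $(a\mid w)=(-1)^{n-1}(a^y\mid p)$, whereas you work with whole series via the identity $\mathrm{rev}(a_y y)=y\,(\mathrm{rev}\,a)^y$.
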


\begin{proof}
For any word $w$ in A of degree $n$ ending in $y$, we could write it uniquely as
    \begin{equation*}
        w=Y_{n_1}\ldots Y_{n_r},
    \end{equation*}
    let $p$ be the word determined by $\mathrm{rev}(w)=yp$, by the explicit formula \eqref{eq:antipode formula}, we have
    \begin{equation*}
        S_{*}(w)=(-1)^np(z,y)y.
    \end{equation*}
    By Lemma \ref{lem: reverse}, we have that \begin{align*}
        (\pi_Y(a)\mid w)=(a\mid w)&=(\mathrm{rev}(a)\mid \mathrm{rev}(w))=(-1)^{n-1}(a\mid \mathrm{rev}(w))=(-1)^{n-1}(a\mid yp)\\
        &=(-1)^{n-1}(a^y\mid p)
    \end{align*}

Multiplying this coefficient by the sign $(-1)^n$ gives $-(a^y\mid p)$, as $w$ ranges over all words of degree $n$ ending $y$, the word $p$ ranges bijectively over all words of length $n-1$, therefore we get the above formula.
\end{proof}

\begin{Prop}\label{prop:formula_Q}
    Let $a\in \mathbb{L}^{\ge 3}(x,y)$ be homogeneous of degree $n$,
    \begin{equation*}
        Q(a)=\left(a^y(z,y)-a_y(x,y)-\frac{1+(-1)^n}{n}(a\mid x^{n-1}y)y^{n-1}\right)y
    \end{equation*}
\end{Prop}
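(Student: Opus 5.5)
We compute the two terms of $Q(a)=S(a)_*-S_*(a_*)$ separately. Since $a\in\mathbb{L}(x,y)$ is primitive for $\Delta_\shuffle$, we have $S(a)=-a$. Hence the first term is
\[
S(a)_*=-a_*=-\pi_Y(a)-\frac{(a\mid x^{n-1}y)}{n}y^n,
\]
using that only the $n$-th summand of the regularization \eqref{eq:regulirization} survives for $a$ homogeneous of degree $n$. Moreover $\pi_Y(a)=a_yy$, because $\varepsilon(a)=0$. So
\[
S(a)_*=-a_y(x,y)y-\frac{(a\mid x^{n-1}y)}{n}y^n .
\]

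For the second term, linearity gives
\[
S_*(a_*)=S_*(\pi_Y(a))+\frac{(a\mid x^{n-1}y)}{n}S_*(y^n).
\]
The first piece is handled by Proposition \eqref{eq:formula-piy}, applicable since $n\ge 3\ge 2$: $S_*(\pi_Y(a))=-a^y(z,y)y$. For the second piece, note $y=Y_1$, so $y^n=Y_1^n$. The antipode formula \eqref{eq:antipode formula} with all $n_i=1$ gives $S_*(y^n)=(-1)^n z^0y\cdots z^0y=(-1)^n y^n$.

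Substituting both pieces,
\[
Q(a)=-a_y(x,y)y-\frac{(a\mid x^{n-1}y)}{n}y^n+a^y(z,y)y-\frac{(-1)^n(a\mid x^{n-1}y)}{n}y^n .
\]
Factoring out $y$ on the right, using $y^n=y^{n-1}y$, yields exactly
\[
\Bigl(a^y(z,y)-a_y(x,y)-\tfrac{1+(-1)^n}{n}(a\mid x^{n-1}y)y^{n-1}\Bigr)y .
\]

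There is no substantial obstacle; the calculation is direct. The points to check are the sign conventions: that $S_*$ applied to $Y_1^n$ carries the sign $(-1)^{n}$ with the $z$-powers trivial, and that $(\cdot)_*$ only adds the pure $y^n$ correction term with coefficient $(a\mid x^{n-1}y)/n$. The hypothesis $n\ge 3$ is only needed to invoke the preceding proposition (which requires $n\ge2$) and to stay consistent with the statement.
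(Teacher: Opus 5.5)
Your proof is correct and follows essentially the same route as the paper. You write $Q(a)=-a_*-S_*(a_*)$, evaluate $S_*(\pi_Y(a))$ by \eqref{eq:formula-piy}, use $S_*(y^n)=(-1)^ny^n$, and collect terms. The only cosmetic difference is that you get $S_*(y^n)$ from the explicit antipode formula with all $n_i=1$, whereas the paper uses that $y=Y_1$ is $\Delta_*$-primitive.
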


\begin{proof}
As $y$ is primitive, we have $S_*(y)=-y$, it follows that $S_*(y^n)=(-1)^ny^n$. By the formula \eqref{eq:formula-piy}, we have
\begin{align*}
    &S_*(a_*)=S_*(\pi_Y(a)+\frac{(a\mid x^{n-1}y)}{n}y^n)=S_*(\pi_Y(a))+S_*(\frac{(a\mid x^{n-1}y)}{n}y^n)\\&=-a^y(z,y)y+\frac{(a\mid x^{n-1}y)}{n}(-1)^ny^n.
\end{align*}
By definition we have
\begin{align*}
-a_*=-a_yy-\frac{(a\mid x^{n-1}y)}{n}y^n.  
\end{align*}
Therefore we have the formula for $Q(a)=S(a)_*-S_*(a_*)=-a_*-S_*(a_*)$.    
\end{proof}

\begin{Prop}\label{Prop:formula}
Let $a\in \mathbb{L}^{\ge 3}(x,y)$ be homogeneous of degree $n$, then the following two conditions are equivalent
\begin{enumerate}
    \item $S_*(a_*)=-a_*$.
    \item $a^y(z,y)-a_y(x,y)=\frac{1+(-1)^n}{n}(a\mid x^{n-1}y)y^{n-1}$.
\end{enumerate}
\end{Prop}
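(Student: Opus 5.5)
This is a direct combination of Lemma \ref{lem:Q_equivalence} and Proposition \ref{prop:formula_Q}. First, by Lemma \ref{lem:Q_equivalence} applied to $a\in\mathbb{L}^{\ge 3}(x,y)$, condition (1) holds if and only if $Q(a)=0$.

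Next, I substitute the explicit expression for $Q(a)$ from Proposition \ref{prop:formula_Q}:
\[
Q(a)=\Bigl(a^y(z,y)-a_y(x,y)-\tfrac{1+(-1)^n}{n}(a\mid x^{n-1}y)\,y^{n-1}\Bigr)y .
\]
It then remains to see that $Py=0$ forces $P=0$ for any $P\in A$. This holds because right multiplication by $y$ in the free algebra $A=\mathbb{K}\langle\langle x,y\rangle\rangle$ is injective: it sends distinct words to distinct words, so it preserves coefficients word by word. Hence $Q(a)=0$ is equivalent to
\[
a^y(z,y)-a_y(x,y)=\tfrac{1+(-1)^n}{n}(a\mid x^{n-1}y)\,y^{n-1},
\]
which is condition (2).

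The only point needing care is to note that all three terms inside the bracket are honest elements of $A$. Here $a^y(z,y)$ means the substitution $x\mapsto z=-x-y$ in $a^y$, which is well defined because $a$ is homogeneous, so $a^y$ is a polynomial. Consequently the factorization through right multiplication by $y$ is legitimate and the cancellation argument applies. I expect no real obstacle, since everything reduces to the two cited results.
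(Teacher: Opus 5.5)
Your proposal is correct and follows the paper's proof exactly: Lemma~\ref{lem:Q_equivalence} turns (1) into $Q(a)=0$, Proposition~\ref{prop:formula_Q} writes $Q(a)$ as the bracketed expression times $y$, and injectivity of right multiplication by $y$ gives (2). Your remark on well-definedness of the substitution is harmless but not needed, since $x\mapsto z$ is defined on all of $A$ because $z$ has no constant term.
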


\begin{proof}
By the Lemma \ref{lem:Q_equivalence} and Proposition \ref{prop:formula_Q}, $S_*(a_*)=-a_*$ is equivalent to $Q(a)=0$, which is equivalent to 
$\left(a^y(z,y)-a_y(x,y)-\frac{1+(-1)^n}{n}(a\mid x^{n-1}y)y^{n-1}\right)y=0$. And this is equivalent to $a^y(z,y)-a_y(x,y)-\frac{1+(-1)^n}{n}(a\mid x^{n-1}y)y^{n-1}=0$ because right multiplication by $y$ is injective.
\end{proof}

\subsection{Auxiliary results on free Lie algebra}
We prepare some useful facts about free Lie algebra for later use, and we present the proofs for convenience.

Fix two letters $u< v$ and order words of fixed length lexicographically.
\begin{Lemma}\label{lem:order}
Let $0\ne H\in \mathbb{L}(u,v)$ be homogeneous of degree $N\ge 2$, the lexicographically smallest word occurring in $H$ begins with $u$ and ends with $v$.    
\end{Lemma}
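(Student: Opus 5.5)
The plan is to use the shuffle-primitivity of Lie elements (Lemma \ref{lem: reverse}(2)) together with a Lyndon-word style argument. Let $w_0$ be the lexicographically smallest word occurring in $H$, and write $c=(H\mid w_0)\neq 0$. We must show that $w_0$ begins with $u$ and ends with $v$.

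\textbf{First letter.} Since $H\in\mathbb{L}(u,v)$ is homogeneous of degree $N\ge 2$, it lies in the span of brackets $[u,\cdot]$ and $[v,\cdot]$ of lower-degree Lie elements. Hence $H$ is not a multiple of $u^N$ or of $v^N$: these are the only Lie monomials in a single letter, and they vanish for $N\ge2$. In particular, some word of $H$ contains the letter $u$.

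Suppose $w_0$ begins with $v$. I would argue by contradiction using the shuffle condition $(H\mid p\shuffle q)=0$. Write $w_0=v^k u w'$ with $k\ge1$, which is possible because otherwise $w_0=v^N$. Then take the decomposition $p=v^k$, $q=uw'$. The shuffle $v^k\shuffle uw'$ contains the word $uw'v^k$, among other words. The lexicographically smallest words in this shuffle begin with $u$, and all of them are smaller than $w_0$. By minimality of $w_0$, the coefficient of $H$ on every word of $v^k\shuffle uw'$ other than those lexicographically $\ge w_0$ vanishes. So the identity $0=(H\mid v^k\shuffle uw')$ reduces to a sum over words $\ge w_0$.

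This is not immediately $c$ times a nonzero multiplicity. I therefore expect a cleaner route: use $\mathrm{rev}(H)=(-1)^{N-1}H$ (Lemma \ref{lem: reverse}(1)) combined with the standard fact that $H$ is a combination of the standard bracketings $P_\ell$ of Lyndon words $\ell$, where $P_\ell=\ell+(\text{lexicographically larger words})$. Then the minimal word of $H$ is the minimal Lyndon word $\ell$ with nonzero coefficient. Indeed, distinct Lyndon words have distinct leading terms, so no cancellation can occur. A Lyndon word of length $\ge2$ over $u<v$ begins with $u$ and ends with $v$: it is strictly smaller than its proper suffixes, so it cannot end with $u$, since the suffix $u$ would be smaller or equal. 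Likewise, if it began with $v$ it would have to be $v^N$, which is not Lyndon.

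\textbf{Main obstacle.} The main obstacle is justifying the triangularity $P_\ell\in \ell+\mathrm{span}\{w>\ell\}$ and the Lyndon basis theorem. If I wish to avoid citing Lyndon theory, I would instead prove the two endpoint claims directly.

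For the \emph{last letter}, write $H=H_uu+H_vv$. The Lie element $H$ is annihilated by the right-derivative map $w\mapsto$ (drop last letter and sum), in the sense of the Dynkin/right-bracketing identity. Concretely, $H=\frac1N\sum(\text{right-normed bracketing of its words})$.

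Each right-normed bracket $[a_1,[a_2,\dots,[a_{N-1},a_N]]]$ has its smallest-word contributions controlled by the letters $a_i$. One then checks inductively that the minimal word begins with $u$ and ends with $v$. I would first try this induction on $N$, using $H=\sum_i [u,A_i]+[v,B_i]$ and tracking minimal words. The fallback is to cite Reutenauer's book for the Lyndon triangularity.
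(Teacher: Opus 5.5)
Your Lyndon-basis argument is correct, and it takes a genuinely different route from the paper's. You expand $H=\sum_\ell c_\ell P_\ell$ over Lyndon words of length $N$ and use the standard triangularity $P_\ell\in \ell+\mathrm{span}\{w>\ell\}$ (Reutenauer, \emph{Free Lie Algebras}; Lothaire, \emph{Combinatorics on Words}). Then the smallest word of $H$ is the smallest $\ell$ with $c_\ell\neq 0$. Your check that a Lyndon word of length $\ge 2$ over $u<v$ begins with $u$ and ends with $v$ is right.

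The paper never passes through a basis. It uses only shuffle-orthogonality $(H\mid p\shuffle q)=0$ (Lemma \ref{lem: reverse}(2)) together with minimality of $\omega$. For $\omega=v^rq$ it pairs $H$ with $v\shuffle v^{r-1}q$; for $\omega=qu^r$ with $q$ ending in $v$, it pairs $H$ with $qu^{r-1}\shuffle u$. Every word in these shuffles other than $\omega$ is lexicographically smaller, so the pairing equals $r(H\mid\omega)\neq 0$, a contradiction. The paper's proof is self-contained. Yours imports a nontrivial textbook theorem but proves more: the minimal word is itself a Lyndon word.

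You abandoned your first computation too early; it already closes. Each shuffle in $v^k\shuffle uw'$ places some $j\le k$ of the $v$'s before the leading $u$ of $uw'$, so it yields a word beginning with $v^ju$. For $j<k$ this word is strictly smaller than $w_0=v^kuw'$, and $j=k$ occurs only for the concatenation itself. Hence $0=(H\mid v^k\shuffle uw')=c$. This is the paper's argument, with multiplicity $1$ instead of $r$.

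The last letter is handled the same way with $q\shuffle u^r$, where $q$ is nonempty and ends in $v$. Every word there other than $qu^r$ is smaller than $w_0$, and $qu^r$ occurs exactly once. The extreme words $u^N$ and $v^N$ have coefficient $0$ by multidegree.

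By contrast, your Dynkin/right-normed induction sketch for the last letter is not an argument as written. The claim that $H$ is ``annihilated by the right-derivative map'' is not meaningful as stated, so that paragraph should be dropped.
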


\begin{proof}
 Let $\omega$ be the least word with respect to lexicographic order such that $(H\mid \omega)\ne 0$, we first show that $w$ cannot begin with $v$. We prove this by contradiction.

 Suppose that $\omega$ starts with $v$, we write $\omega=v^rq, r\ge 1$, where $q$ is empty or its first letter is $u$, if $q$ is empty, we have $v^{N-1}\shuffle v=Nv^N$, so by the primitive of $H$ and Lemma \ref{lem: reverse}, we have $(H\mid \omega)=\frac{1}{N}(H\mid v^{N-1}\shuffle v)=0$. Assume that $q$ is nonempty and put $p=v^{r-1}q$, then we have
 \begin{equation*}
     0=(H\mid v\shuffle p)=r(H\mid \omega),
 \end{equation*}
 because inserting $v$ in $q$ will resulting a word that begin with $v^{r-1}$ smaller than $\omega$ which is not detected by $H$ by the assumption of $\omega$ and  $(H\mid \omega)=0$ is a contradiction.

 Now we show that $\omega$ can not ends in $u$. We prove by contradiction and assume that $\omega$ ends in $u$. We could write $\omega=qu^r$ with $r\ge 1$ and $q$ is either empty or ends in $v$. If $q$ is empty, then $(H\mid \omega)=\frac{1}{N}(H\mid u\shuffle u^{N-1})=0$ which is a contradiction. Otherwise, we put $p=qu^{r-1}$. In the shuffle of $p \shuffle u$, if $u$ is inserted in the slot that beginning
immediately before the terminal block $u^{r-1}$ and ending immediately after it will all give the same word $\omega$. Any insertion earlier in the word that occurs before some later letter $v$ (such a $v$ always exist because $q$ ends in $v$) will resulting a word lexicographically smaller than $\omega$, because $\omega$ is the smallest in $H$, it makes no contribution to the pairing, therefore
\begin{equation*}
    0=(H\mid p\shuffle u)=r(H\mid \omega)
\end{equation*}
a contradiction, and therefore $\omega$ ends with $v$.
\end{proof}

\begin{Cor}\label{cor:zero}
    Let $H\in \mathbb{L}(u,v)$ be homogeneous of degree at least two, if every word occurring in $H$ begins and ends in the same letter, then $H=0$.
\end{Cor}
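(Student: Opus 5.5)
We argue by contraposition using Lemma \ref{lem:order}. Suppose $H\neq 0$ is homogeneous of degree $N\ge 2$ and every word occurring in $H$ begins and ends with the same letter. Let $\omega$ be the lexicographically smallest word with $(H\mid\omega)\neq 0$; this word exists because $H\neq 0$ and there are only finitely many words of length $N$. Lemma \ref{lem:order} says that $\omega$ begins with $u$ and ends with $v$. Since $u\neq v$, the word $\omega$ does not begin and end with the same letter, which contradicts the hypothesis. Hence $H=0$.

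Two points need checking. First, the ambient Lie algebra is a completion, so $H$ is a priori a series; but $H$ is homogeneous of a fixed degree $N$, so it is a finite linear combination of words of length $N$. The minimum $\omega$ is therefore well defined, and Lemma \ref{lem:order} applies as stated.

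Second, the lexicographic order used in Lemma \ref{lem:order} depends on the chosen ordering $u<v$. The hypothesis of the corollary is symmetric in $u$ and $v$, so no particular ordering is needed: we apply the lemma with the fixed ordering $u<v$ and obtain the contradiction directly.

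There is no real obstacle here; all the combinatorial work, namely the shuffle-pairing argument showing that the minimal word starts with $u$ and ends with $v$, was already done in Lemma \ref{lem:order}. The argument is short enough that I would present it in a few lines after the statement.
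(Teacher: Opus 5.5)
Your proposal is correct and is the paper's argument: assuming $H\neq 0$, Lemma \ref{lem:order} says the lexicographically least word occurring in $H$ begins with $u$ and ends with $v$, which contradicts the hypothesis. Your added remarks, that homogeneity makes $H$ a finite sum and that the $u<v$ ordering is harmless, are correct but not needed.
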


\begin{proof}
  Suppose that $H$ is not equal to 0, then by the Lemma \ref{lem:order}, the least word begins with $u$ and ends in $v$.  
\end{proof}

\begin{Lemma}\label{lem:mix_boundary}
Let $p\in \mathbb{L}(u,v)$ be an element of homogeneous degree $N\ge 4$, suppose that every word of length $N$ that begins and ends in different letters has the same coefficient $\lambda$ in $p$, then $\lambda=0$ and $p=0$.
 \end{Lemma}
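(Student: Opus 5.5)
The plan is to show $\lambda=0$ first; then $p=0$ follows from Corollary \ref{cor:zero}. Indeed, once $\lambda=0$, every word occurring in $p$ begins and ends in the same letter, and the corollary applies since $N\ge 2$.

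To show $\lambda=0$, I will use the shuffle-orthogonality of Lemma \ref{lem: reverse}(2): $(p\mid \alpha\shuffle\beta)=0$ for nonempty $\alpha,\beta$. The idea is to pick a shuffle product in which the words beginning and ending in different letters carry nonzero total multiplicity, while the remaining words cancel. Pairing with $u\shuffle w$ alone does not isolate $\lambda$, so I combine several shuffles.

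A cleaner route is to exploit linear dependencies available among Lie elements. Consider the element
\[
\Theta=\sum_{\text{words } w \text{ with first letter}\ne\text{last letter}} w .
\]
By hypothesis $(p\mid w)=\lambda$ for each such $w$. Write $p=p_{\mathrm{mix}}+p_{\mathrm{same}}$, where $p_{\mathrm{mix}}=\lambda\,\Theta$ and $p_{\mathrm{same}}$ is supported on words beginning and ending in the same letter. Now apply the primitivity conditions $(p\mid u\shuffle w')=0$ and $(p\mid v\shuffle w')=0$, summed over suitable $w'$. Summing $(p\mid u\shuffle w')$ over all words $w'$ of length $N-1$ gives $(p\mid u\shuffle \sum w')$. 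Since $\sum_{w'}w'=(u+v)^{N-1}$ and $u\shuffle (u+v)^{N-1}$ is a combination of all words weighted by their number of $u$'s, this yields
\[
\sum_w \#_u(w)\,(p\mid w)=0 .
\]
This holds automatically for Lie elements of degree $\ge 2$, so it is not enough on its own.

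I will therefore use more refined shuffles such as $u\shuffle v^{N-1}$, $v\shuffle u^{N-1}$, and $uv\shuffle w$. For example, $(p\mid u\shuffle v^{N-1})=0$ gives $\sum_{i}(p\mid v^i u v^{N-1-i})=0$. The two end terms $uv^{N-1}$ and $v^{N-1}u$ are mixed words and contribute $2\lambda$. The interior terms are same-letter words with unknown coefficients, so this gives $2\lambda+\sum_{0<i<N-1}(p\mid v^iuv^{N-1-i})=0$. I need another relation to kill the interior sum. For that I will use the first claim of Lemma \ref{lem: reverse}, $\mathrm{rev}(p)=(-1)^{N-1}p$, together with the further shuffle $(p\mid uv^{N-2}\shuffle v)$ and its variants, solving the resulting small linear system for $\lambda$.

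The main obstacle is finding the right finite family of shuffle identities in which the unknown same-letter coefficients cancel. This is why $N\ge 4$ is needed: in degree $3$ the element $[u,[u,v]]$ has mixed words with coefficients $-2,1$ that are not all equal, but small-degree coincidences must be excluded. A fallback is to work in a Lie basis, for example using the right-normed bracket expansion, in which the coefficient of $uw$ recovers the element; the hypothesis then forces an explicit structure that can be checked.
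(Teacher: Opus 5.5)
Your outline is the right shape: prove $\lambda=0$ from the shuffle orthogonality $(p\mid\alpha\shuffle\beta)=0$, then apply Corollary~\ref{cor:zero}. But the proposal never produces the relation that forces $\lambda=0$, and the relations you do name cannot produce it.

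Reversal already gives $\lambda=(-1)^{N-1}\lambda$, because $\mathrm{rev}$ exchanges the mixed words $u\cdots v$ and $v\cdots u$. So even $N$ is trivial, and the whole content is odd $N$. For odd $N$, every identity you write down ($u\shuffle v^{N-1}$, $uv^{N-2}\shuffle v$ and its variants, $v\shuffle u^{N-1}$, together with $\mathrm{rev}$) lives in multidegree $(1,N-1)$ or $(N-1,1)$. There the Lie elements are the multiples of $\ad_v^{N-1}(u)$ (resp.\ $\ad_u^{N-1}(v)$). The two mixed words of these elements carry coefficients $1$ and $(-1)^{N-1}=1$.

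Concretely, the relations $(p\mid v\shuffle v^{a}uv^{N-2-a})=0$ for $0\le a\le N-2$ force $(p\mid v^{a}uv^{N-1-a})=(-1)^a\binom{N-1}{a}\lambda$. This is perfectly consistent with $(p\mid v^{N-1}u)=\lambda\neq 0$ when $N$ is odd. So no small linear system built from these identities can yield $\lambda=0$. The $uv\shuffle w$ shuffles and the right-normed-basis fallback you mention are never specified, so they do not close the gap either.

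The missing idea is to move to multidegree $(2,N-2)$ and choose a single same-letter word whose coefficient can be eliminated. Set $\kappa=(p\mid uv^{N-2}u)$. The shuffles $u\shuffle v^{N-2}u$ and $v\shuffle uv^{N-3}u$ contain only $uv^{N-2}u$ and mixed words. They give $\kappa+(N-1)\lambda=0$ and $2\lambda+(N-2)\kappa=0$, hence $N(N-3)\lambda=0$.

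This is exactly where $N\ge 4$ is used, and your explanation of that hypothesis is mistaken. In $[u,[u,v]]=uuv-2uvu+vuu$ the word $uvu$ is not mixed. In fact $[u,[u,v]]+[v,[v,u]]$ is a nonzero degree-$3$ Lie element in which all four mixed words $uuv,vuu,vvu,uvv$ have coefficient $1$. So the statement genuinely fails for $N=3$.
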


\begin{proof}
Let $\kappa:=(p\mid uv^{N-2} u)$ be the coefficient of $p$ before the word $uv^{N-2}u$, by the Lemma \ref{lem: reverse}, we have
\begin{align*}
    &(p\mid u\shuffle v^{N-2} u)=\kappa+(N-1)\lambda=0\\
    &(p\mid v\shuffle uv^{N-3} u)=2\lambda+(N-2)\kappa=0
\end{align*}
by the first equality $\kappa=-(N-1)\lambda$, we substitute it into the second one we have $-N(N-3)\lambda=0$, for $\mathbb{K}$ of characteristic zero, we have $\lambda=0$, therefore $p$ contains only the word that begins and ends in the same letter, by the Corollary \ref{cor:zero}, $p=0$.
\end{proof}

\section{Proof of the Theorem \ref{th: harmonic_antipode}} \label{sec:proof_of_first}
In this section, we prove the Theorem \ref{th: harmonic_antipode}. It suffices to prove the statement for the homogeneous element, we assume that $a$ in $\mathbb{L}(x,y)$ is homogeneous of degree $n\ge 3$ throughout this section.

\subsection{From $(1)$ to $(2)$}
 We introduce the free generators $v,u$ and denote $\sigma$ the algebra morphism of $\mathbb{K}\langle\langle u,v\rangle\rangle$ that interchange $u$ and $v$, set
    \begin{equation}\label{eq:F_uv}
        F(u,v):=a(v,-u-v),
    \end{equation}
    $F$ is a homogeneous element of degree $n$, by the decomposition $a=a_xx+a_yy$, we have
    \begin{equation*}
        F(u,v)=a_x(v,-u-v)v+a_y(v,-u-v)(-u-v),
    \end{equation*}
    so we have $F_u(u,v)=-a_y(v,-u-v)$, and similarly by the decomposition $a=xa^x+ya^y$, we have
    \begin{equation*}
        F(u,v)=va^x(v,-u-v)+(-u-v)a^y(v,-u-v),
    \end{equation*}
    so that we have $F^u(u,v)=-a^y(v,-u-v)$, applying $\sigma$ to this formula, we have
    \begin{equation*}
        \sigma(F^u)(u,v)=-a^y(u,-u-v).
    \end{equation*}
We study the difference between $F_u$ and $\sigma(F^u)$, 
\begin{equation*}
    E:=F_u-\sigma(F^u)=a^y(u,-u-v)-a_y(v,-v-u)
\end{equation*}

Now we set $H:=[F,v]+[\sigma(F),u]=Fv-vF+\sigma(F)u-u(\sigma (F))$, for a word $p$ of degree $\mathrm{deg}(F)-1$, the coefficient of $vpu$ is
    \begin{align*}
        (H\mid vpu)&=-(F\mid pu)+(\sigma(F)\mid vp)\\
        &=-(F\mid pu)+(F\mid u\sigma(p))=-(E\mid p)
    \end{align*}

    Similarly, we have
    \begin{align*}
     (H\mid upv)&=(F\mid up)-(\sigma(F)\mid pv)\\
                &=(F\mid up)-(F\mid \sigma(p)u)=-(E\mid \sigma(p)),
    \end{align*}

\begin{Lemma}\label{lem:E_value}
    Suppose that $S_*(a_*)=-a_*$, then we have
    \begin{equation*}
        E=(-1)^{n-1}\frac{1+(-1)^n}{n}(a\mid x^{n-1}y)(u+v)^{n-1}
    \end{equation*}
\end{Lemma}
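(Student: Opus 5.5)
The plan is to use Proposition \ref{Prop:formula} and then substitute variables. Under the hypothesis $S_*(a_*)=-a_*$, Proposition \ref{Prop:formula} gives, with $z=-x-y$,
\begin{equation*}
a^y(-x-y,y)-a_y(x,y)=\frac{1+(-1)^n}{n}(a\mid x^{n-1}y)\,y^{n-1}.
\end{equation*}
This is an identity of noncommutative polynomials in the free variables $x,y$. We may therefore apply any algebra morphism to both sides.

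The key step is to choose the substitution that turns the left-hand side into $E=a^y(u,-u-v)-a_y(v,-u-v)$. We want $a_y(x,y)\mapsto a_y(v,-u-v)$, so we set $x\mapsto v$ and $y\mapsto -u-v$. Then $-x-y\mapsto -v+u+v=u$, so $a^y(-x-y,y)\mapsto a^y(u,-u-v)$. The left-hand side becomes exactly $E$.

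On the right-hand side, $y^{n-1}\mapsto(-u-v)^{n-1}=(-1)^{n-1}(u+v)^{n-1}$. This gives
\begin{equation*}
E=(-1)^{n-1}\frac{1+(-1)^n}{n}(a\mid x^{n-1}y)(u+v)^{n-1},
\end{equation*}
which is the claimed formula.

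The one point needing care is that substitution commutes with taking the components $a^y$ and $a_y$. It does not: after substituting, $a^y(v,-u-v)$ is not the $u$-beginning part of anything. This is why the statement is phrased with $a^y$ and $a_y$ evaluated at the substituted arguments rather than with $F$-components. We only ever substitute into the already-extracted polynomials $a^y$ and $a_y$, so this causes no problem.

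We also need the substitution $x\mapsto v$, $y\mapsto -u-v$ to be a well-defined algebra morphism $\mathbb{K}\langle\langle x,y\rangle\rangle\to\mathbb{K}\langle\langle u,v\rangle\rangle$. It is, because it sends generators to homogeneous elements of degree $1$. Beyond this, the argument is only sign bookkeeping, and I expect no real obstacle.
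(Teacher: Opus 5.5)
Your proof is correct and is exactly the paper's argument: substitute $x=v$, $y=-u-v$ (so $z=u$) into the identity of Proposition~\ref{Prop:formula}, then use $(-u-v)^{n-1}=(-1)^{n-1}(u+v)^{n-1}$. Your aside about components is unnecessary and slightly inaccurate, since $F^u(u,v)=-a^y(v,-u-v)$; it plays no role, because you work directly with the explicit expression $E=a^y(u,-u-v)-a_y(v,-u-v)$.
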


\begin{proof}
    We substitute $x=v$, $y=-u-v$ and $z=u$ into the formula of the Proposition \ref{Prop:formula}. 
\end{proof}

\begin{Prop}\label{prop:1-2}
    Suppose that $S_*(a_*)=-a_*$, then we have
    \begin{equation*}
    [a(x,y),x]+[a(z,y),z]=0
    \end{equation*}
    and $(a\mid x^{n-1}y)=0$ for $n$ even.
\end{Prop}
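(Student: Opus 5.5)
The plan is to identify the element $H=[F,v]+[\sigma(F),u]$ introduced above with the left-hand side of the desired identity after a change of variables. Then I would use the coefficient computations for $H$ together with Lemma \ref{lem:E_value} and Lemma \ref{lem:mix_boundary} to force $H=0$.

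\textbf{Step 1: the substitution.} Since $F(u,v)=a(v,-u-v)$, substituting $u=z$, $v=x$ (where $z=-x-y$, so $-z-x=y$) gives
\[
F(z,x)=a(x,y),\qquad \sigma(F)(z,x)=F(x,z)=a(z,-x-z)=a(z,y).
\]
Hence $H(z,x)=[a(x,y),x]+[a(z,y),z]$. The pair $(z,x)$ is again a free generating set of $\mathbb{K}\langle\langle x,y\rangle\rangle$, because the substitution is an invertible linear change of generators. So it suffices to prove $H=0$ in $\mathbb{K}\langle\langle u,v\rangle\rangle$. Moreover, $H$ lies in $\mathbb{L}(u,v)$, since $F$ and $\sigma(F)$ are Lie elements, and it is homogeneous of degree $N=n+1\ge 4$.

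\textbf{Step 2: boundary coefficients of $H$.} The computation just before Lemma \ref{lem:E_value} gives, for every word $p$ of degree $n-1$,
\[
(H\mid vpu)=-(E\mid p),\qquad (H\mid upv)=-(E\mid\sigma(p)).
\]
Under the hypothesis $S_*(a_*)=-a_*$, Lemma \ref{lem:E_value} says
\[
E=c\,(u+v)^{n-1},\qquad c:=(-1)^{n-1}\frac{1+(-1)^n}{n}(a\mid x^{n-1}y).
\]
Every word of degree $n-1$ in $u,v$ appears in $(u+v)^{n-1}$ with coefficient exactly $1$, so $(E\mid p)=(E\mid\sigma(p))=c$ for all such $p$. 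Therefore every word of length $N$ that begins and ends in different letters has the same coefficient $\lambda=-c$ in $H$.

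\textbf{Step 3: conclusion.} Since $N\ge 4$, Lemma \ref{lem:mix_boundary} applies and gives $\lambda=0$ and $H=0$. By Step 1, $H=0$ is the identity $[a(x,y),x]+[a(z,y),z]=0$. From $\lambda=0$ we get $c=0$. When $n$ is even, the factor $\frac{1+(-1)^n}{n}=\frac{2}{n}$ is nonzero, so $(a\mid x^{n-1}y)=0$.

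The main obstacle is the bookkeeping in Step 1. One has to check that the substitution really turns $H$ into the stated commutator identity, that both $F$ and $\sigma(F)$ transform correctly, and that the change of generators is invertible so that vanishing transfers. The remaining steps are direct applications of the results already established in the paper.
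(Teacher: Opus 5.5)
Your proof is correct and follows essentially the same route as the paper: you combine the boundary-coefficient formulas for $H$ with Lemma \ref{lem:E_value} to show that every mixed-boundary word of $H$ has the same coefficient, then Lemma \ref{lem:mix_boundary} forces $H=0$ and $c=0$, and the substitution $u=z$, $v=x$ turns $H$ into the desired identity. You also make explicit that $H\in\mathbb{L}(u,v)$ is homogeneous of degree $n+1\ge 4$, which Lemma \ref{lem:mix_boundary} requires and the paper leaves implicit; the invertibility of the change of generators is harmless but not needed for this direction.
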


\begin{proof}
    By the Lemma \ref{lem:E_value}, we have that for any word $p$ of degree $\deg(F)-1$, we have
    \begin{align*}
    & (H\mid upv)=-(E\mid \sigma(p))=(-1)^{n}\frac{1+(-1)^n}{n}(a\mid x^{n-1}y),\\
    &(H\mid vpu)=-(E\mid p)=(-1)^{n}\frac{1+(-1)^n}{n}(a\mid x^{n-1}y),
    \end{align*}
    therefore every word in $H$ that beginning and ending in different letters has the same coefficient by the Lemma \ref{lem:mix_boundary}, $H=0$ and $(a\mid x^{n-1}y)=0$ for $n$ even.

    Then we substitute $u=-x-y$, $v=x$ into $H$, we have the equation $[a(x,y),x]+[a(z,y),z]=0.$
\end{proof}

\subsection{From (2) to (1)}

\begin{Lemma}\label{lem:2-1-1}
For $a\in \mathbb{L}^{\ge 3}(x,y)$, suppose that $[a(x,y),x]+[a(z,y),z]=0$, then we have $a^y(z,y)=a_y(x,y)$.
\end{Lemma}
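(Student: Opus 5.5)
Our approach is to run the computation of the previous subsection backwards. Put $F(u,v):=a(v,-u-v)$ as in \eqref{eq:F_uv} and $H:=[F,v]+[\sigma(F),u]$. Substituting $v=x$ and $u=z=-x-y$, so that $-u-v=y$, turns $H$ into $[a(x,y),x]+[a(z,y),z]$. This substitution is an invertible linear change of generators of the free algebra, so the hypothesis gives $H=0$ identically in $\mathbb{K}\langle\langle u,v\rangle\rangle$.

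The coefficient identities computed before Lemma \ref{lem:E_value} did not use any hypothesis on $a$. They give
\[
(H\mid vpu)=-(E\mid p)
\]
for every word $p$ of degree $n-1$, where
\[
E=F_u-\sigma(F^u)=a^y(u,-u-v)-a_y(v,-u-v).
\]
Since $H=0$, every coefficient of $E$ vanishes, so $E=0$.

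It remains to translate $E=0$ back to $x,y$. Substitute $u=z=-x-y$ and $v=x$; then $-u-v=y$, and $E=0$ becomes
\[
a^y(z,y)-a_y(x,y)=0,
\]
which is the claim. This substitution is the inverse of the one used in Lemma \ref{lem:E_value}, namely $x=v$, $y=-u-v$, $z=u$.

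The only point needing care is the bookkeeping of the substitution. We must check that $F_u$ and $F^u$ were computed correctly from $a=a_xx+a_yy=xa^x+ya^y$. We must also check that $\sigma(F^u)(u,v)=-a^y(u,-u-v)$ is consistent with the formula for $(H\mid vpu)$. Both are already done in the preceding text, so the proof amounts to reading those identities with $H=0$.
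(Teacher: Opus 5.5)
Your proposal is correct and follows the paper's proof: you pull the hypothesis back along the invertible substitution $u=-x-y$, $v=x$ to get $H=0$, then use the hypothesis-free identity $(H\mid vpu)=-(E\mid p)$ to conclude $E=0$. Substituting back gives $a^y(z,y)=a_y(x,y)$, exactly as in the paper.
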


\begin{proof}
    By the change of variable, $u=-x-y,$ $v=x$, we have that $H:=[F,v]+[\sigma(F),u]=0$, where $F(u,v)$ is defined in \eqref{eq:F_uv}.  Therefore for every word $p$, $(H\mid vpu)=-(E\mid p)=0$, which implies $E=0$ and it is $a^y(z,y)=a_y(x,y).$
\end{proof}

Recall that for the algebra $A$, the \emph{weight} of a word is its length, and its depth is its number of occurrences of $y$. 

\begin{Lemma}\label{lem:2-1-2}
    The depth 1 part of $[a(x,y),x]+[a(z,y),z]$ is $-(a\mid x^{n-1}y)(1+(-1)^n)\ad^n_x(y)$.
\end{Lemma}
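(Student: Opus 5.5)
The plan is to compute the depth $1$ part directly, using that depth (the number of occurrences of $y$) is a grading of $A$ compatible with concatenation and brackets. We track how the substitution $x\mapsto z=-x-y$ interacts with it. Set $c:=(a\mid x^{n-1}y)$ and write $a=\sum_{d\ge 0}a^{(d)}$ for the decomposition of $a$ into depth-homogeneous parts. Since the Lie bracket preserves depth additively, each $a^{(d)}$ is again a Lie element.

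\emph{Step 1: the low-depth parts of $a$.} Since $n\ge 3$, the depth $0$ part of $a$ is a Lie element in $x$ alone of degree $n\ge 2$, hence $a^{(0)}=0$. The depth $1$ part is a Lie element of degree $n$ containing exactly one $y$. Such Lie elements are spanned by iterated brackets of $x$'s with a single $y$, and every such bracket equals $\pm\ad_x^{n-1}(y)$. Therefore $a^{(1)}=\lambda\,\ad_x^{n-1}(y)$ for some scalar $\lambda$. The word $x^{n-1}y$ appears in $\ad_x^{n-1}(y)$ with coefficient $1$, so comparing coefficients gives $\lambda=c$.

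\emph{Step 2: the first bracket.} Because $a^{(0)}=0$ and $x$ has depth $0$, the depth $1$ part of $[a(x,y),x]$ is
\[
[c\,\ad_x^{n-1}(y),x]=-c\,\ad_x^{n}(y).
\]

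\emph{Step 3: the second bracket.} Substituting $z=-x-y$ into a word of depth $d$ produces only terms of depth $\ge d$. So $a(z,y)$ has no depth $0$ part, and its depth $1$ part comes only from $a^{(1)}(z,y)$, taking $-x$ in every occurrence of $z$. This gives
\[
c\,\ad_{-x}^{n-1}(y)=(-1)^{n-1}c\,\ad_x^{n-1}(y).
\]
Now bracket with $z=-x-y$. The $-y$ component would have to meet the (vanishing) depth $0$ part of $a(z,y)$, so only $-x$ contributes. Hence the depth $1$ part of $[a(z,y),z]$ is
\[
(-1)^{n-1}c\,[\ad_x^{n-1}(y),-x]=(-1)^{n-1}c\,\ad_x^n(y).
\]

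Adding the results of Steps 2 and 3 gives
\[
-c\,\ad_x^n(y)+(-1)^{n-1}c\,\ad_x^n(y)=-c\,(1+(-1)^n)\,\ad_x^n(y),
\]
which is the claimed formula. The only point needing care is Step 1, namely that the depth $1$ part of a Lie element is a multiple of $\ad_x^{n-1}(y)$ and that this multiple is exactly $c$. I would justify this via the depth grading on $\mathbb{L}(x,y)$ together with the observation that the only nonzero bracket monomials containing a single $y$ are $\pm\ad_x^{n-1}(y)$. Everything else is sign bookkeeping.
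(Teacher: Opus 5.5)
Your proof is correct and follows essentially the same route as the paper. Both identify the depth-one part of $a$ as $(a\mid x^{n-1}y)\,\ad_x^{n-1}(y)$, using that the degree-$n$, depth-one component of $\mathbb{L}(x,y)$ is spanned by $\ad_x^{n-1}(y)$, and then track this term through the substitution $x\mapsto z$ and the two brackets. You additionally spell out what the paper leaves implicit, namely that the depth-zero part of $a$ vanishes and that substituting $z=-x-y$ cannot lower depth, which is exactly what justifies the paper's terse sign computation.
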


\begin{proof}
The homogeneous degree $n$, depth 1 part of  $\mathbb{L}(x,y)$ is one dimensional with basis $\ad^{n-1}_xy$, we have that the depth 1 part of $a(x,y)$ is $(a\mid x^{n-1}y)\ad^{n-1}_x y$. The depth 1 part of $a(z,y)=(-1)^{n-1}(a\mid x^{n-1}y)\ad^{n-1}_x(y)$. Therefore the depth 1 part of $[a(x,y),x]=-(a\mid x^{n-1}y)\ad^n_x(y).$ The depth 1 part of $[a(z,y),z]=(-1)^{n-1}(a\mid x^{n-1}y)\ad^n_x(y).$

    So in total, the depth 1 part of the equation is $-(a\mid x^{n-1}y)(1+(-1)^n)\ad^n_x(y)$.
\end{proof}

\begin{Prop}
    Suppose that $[a(x,y),x]+[a(z,y),z]=0$, then we have $S_*(a_*)=-a_*.$
\end{Prop}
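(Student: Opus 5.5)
By Proposition \ref{Prop:formula}, it suffices to show
\[
a^y(z,y)-a_y(x,y)=\frac{1+(-1)^n}{n}(a\mid x^{n-1}y)\,y^{n-1}.
\]
Lemma \ref{lem:2-1-1} already gives the left-hand side: the hypothesis $[a(x,y),x]+[a(z,y),z]=0$ implies $a^y(z,y)=a_y(x,y)$, so the left-hand side vanishes. What remains is to show that the right-hand side also vanishes, i.e.\ that $(1+(-1)^n)(a\mid x^{n-1}y)=0$.

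For this I would use Lemma \ref{lem:2-1-2}. The element $[a(x,y),x]+[a(z,y),z]$ is homogeneous of degree $n+1$, and it is zero by hypothesis. Hence its depth $1$ component is zero. By Lemma \ref{lem:2-1-2}, that component equals $-(a\mid x^{n-1}y)(1+(-1)^n)\ad_x^n(y)$.

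Since $\ad_x^n(y)\neq 0$ in the free Lie algebra (its coefficient on $x^ny$ is $1$), we get $(1+(-1)^n)(a\mid x^{n-1}y)=0$. For $n$ odd this is automatic. For $n$ even it says $(a\mid x^{n-1}y)=0$. Either way the right-hand side of the identity in Proposition \ref{Prop:formula}(2) vanishes, so condition (2) of that proposition holds, and therefore $S_*(a_*)=-a_*$.

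The one point needing care is the depth bookkeeping in Lemma \ref{lem:2-1-2}. Depth is not preserved by the substitution $x\mapsto z=-x-y$: higher-depth terms of $a$ stay in depth $\ge 2$ after substitution, but depth $0$ could in principle contribute. Here $a$ has no depth $0$ part, since $a\in\mathbb{L}^{\ge 3}$ and the only depth $0$ Lie elements are multiples of $x$. Also, bracketing with $x$ or $z$ does not lower depth, because $[\,\cdot\,,y]$ only increases it. So taking the depth $1$ projection of the hypothesis is legitimate, and this is the main step to check.

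Finally, I would treat a general $\psi\in\mathbb{L}^{\ge 3}(x,y)$ by applying the above to each homogeneous component. Both conditions of Theorem \ref{th: harmonic_antipode} are linear and respect the grading: $S_*$ is graded and $(-)_*$ preserves degree. This completes the direction (2)$\Rightarrow$(1).
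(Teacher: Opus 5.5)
Your proposal is correct and matches the paper's proof: you reduce via Proposition~\ref{Prop:formula}, make the left-hand side vanish with Lemma~\ref{lem:2-1-1}, and make the right-hand side vanish by taking the depth-one component with Lemma~\ref{lem:2-1-2}. Your extra remarks are also correct (that the depth-one projection is legitimate, that $\ad_x^n(y)\neq 0$, and the reduction to homogeneous components), but they only re-justify points the paper already covers in Lemma~\ref{lem:2-1-2} and at the start of Section~\ref{sec:proof_of_first}.
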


\begin{proof}
By the Proposition \ref{Prop:formula}, it suffice to prove that $a^y(z,y)-a_y(x,y)=\frac{1+(-1)^n}{n}(a\mid x^{n-1}y)y^{n-1}$.   

By the Lemma \ref{lem:2-1-2}, the condition $[a(x,y),x]+[a(z,y),z]=0$ implies $-(a\mid x^{n-1}y)(1+(-1)^n)=0$, therefore it suffice to prove $a^y(z,y)-a_y(x,y)=0$ which is implied by the Lemma \ref{lem:2-1-1}.
\end{proof}

\section{Double shuffle Lie algebra and symmetric Kashiwara--Vergne Lie algebra}\label{sec:dmr_krv}

We first recall the definition of the Lie algebra $\mathfrak{dmr}_0$.

\begin{Def}[\cite{Racinet2002}]
Let $\psi\in \mathbb{L}^{\ge 3}(x,y)$,  it is in $\mathfrak{dmr}_0$ if it satisfies
\begin{equation*}
\Delta_*(\psi_*)=\psi_*\otimes 1+1\otimes \psi_*
\end{equation*}    
\end{Def}

\begin{Th}[Same as Theorem \ref{th:double_shuffle}]
    For every $\psi\in \mathfrak{dmr}_0$, it satisfies the equation \[[\psi(x,y),x]+[\psi(-x-y,y),-x-y]=0.\]
\end{Th}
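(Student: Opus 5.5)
The plan is to reduce the statement to Theorem \ref{th: harmonic_antipode}: it suffices to show that every $\psi\in\mathfrak{dmr}_0$ satisfies $S_*(\psi_*)=-\psi_*$. Once that holds, the implication $(1)\Rightarrow(2)$ of Theorem \ref{th: harmonic_antipode} gives $[\psi(x,y),x]+[\psi(-x-y,y),-x-y]=0$ directly, since $\psi\in\mathbb{L}^{\ge 3}(x,y)$ by the definition of $\mathfrak{dmr}_0$.

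To get $S_*(\psi_*)=-\psi_*$, I will use the general Hopf algebra fact that the antipode acts by $-1$ on primitive elements. By definition of $\mathfrak{dmr}_0$, $\psi_*$ is primitive for $\Delta_*$. The relevant Hopf algebra is the completion $\mathbb{K}\langle\langle Y\rangle\rangle$, so a little care is needed.

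\begin{itemize}
\item First I decompose $\psi$ into homogeneous components $\psi_n$, $n\ge 3$. The map $(-)_*$ preserves degree when $\deg Y_m=m$, $\deg y=1$, since $x^{n-1}y$ and $y^n$ both have degree $n$. The coproduct $\Delta_*$ is also graded. Hence each $(\psi_n)_*=(\psi_*)_n$ is primitive in the connected graded Hopf algebra $\mathbb{K}\langle Y\rangle$.
\item For a primitive $p$, the antipode axiom $m\circ(S_*\otimes\id)\circ\Delta_*=\eta\circ\varepsilon_*$ gives $S_*(p)\cdot 1+S_*(1)\,p=\varepsilon_*(p)=0$. Since $p$ has positive degree, $\varepsilon_*(p)=0$, and therefore $S_*(p)=-p$.
\item Summing over degrees gives $S_*(\psi_*)=-\psi_*$. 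This is compatible with how the paper itself reduces Theorem \ref{th: harmonic_antipode} to homogeneous elements.
\end{itemize}

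There is no real obstacle here; the only point requiring attention is the degree bookkeeping above. Note that Lemma \ref{lem:Q_equivalence} is not needed: we verify condition (1) of Theorem \ref{th: harmonic_antipode} directly and then invoke the theorem.
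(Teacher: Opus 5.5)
Your proposal is correct and follows the paper's proof. Primitivity of $\psi_*$ under $\Delta_*$ gives $S_*(\psi_*)=-\psi_*$, and the implication $(1)\Rightarrow(2)$ of Theorem \ref{th: harmonic_antipode} then gives the relation; your degree-by-degree bookkeeping simply spells out, for the completed Hopf algebra, what the paper calls ``the property of Hopf algebra.''
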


\begin{proof}
By the property of Hopf algebra, suppose that $\Delta_*(a_*)=a_*\otimes 1+1\otimes a_*$, it follows that $S_*(a_*)=-a_*$, then the result follows from Theorem \ref{th: harmonic_antipode}.    
\end{proof}

\subsection{Kashiwara--Vergne Lie algebra}
Now we recall the definition of Kashiwara--Vergne Lie algebra $\mathfrak{krv}_2$ and the symmetric Kashiwara--Vergne Lie algebra $\mathfrak{krv}^{\mathrm{sym}}_2$ introduced in the paper \cite{Alekseev2012} and some reformulation of the $\mathfrak{krv}_2$ in \cite{GTgenus0}.

Let $\mathrm{Der}(A)$ denote the Lie algebra of derivations of $A$, define the vector space $\mathrm{tDer}(A):=A\oplus A$, and for $(\alpha,\beta)\in \mathrm{tDer}(A)$, we denote by $\rho(\alpha,\beta)$, the derivation in $\mathrm{Der}(A)$ determined by 
\begin{equation*}
    \rho(\alpha,\beta)(x)=[x,\alpha],\quad \rho(\alpha,\beta)(y)=[y,\beta].
\end{equation*}
The space $\mathrm{tDer}(A)$ of tangential derivations is a Lie algebra, the Lie bracket is given by
\begin{equation}\label{eq:Lie_bracket}
    [(\alpha_1,\beta_1),(\alpha_2,\beta_2)]=(\alpha_3,\beta_3)
\end{equation}
where
\begin{align*}
 &\alpha_3=\rho(\alpha_1,\beta_1)(\alpha_2)-\rho(\alpha_2,\beta_2)(\alpha_1)+[\alpha_1,\alpha_2],\\
 &\beta_3=\rho(\alpha_1,\beta_1)(\beta_2)-\rho(\alpha_2,\beta_2)(\beta_1)+[\beta_1,\beta_2],
\end{align*}
with this bracket, the map $\rho:\mathrm{tDer}(A)\to \mathrm{Der}(A)$ is a Lie algebra morphism.

A tangetial derivation $(\alpha,\beta)$ is called \emph{special} if we have
\begin{equation*}
    [x,\alpha]+[y,\beta]=0.
\end{equation*}
We denote also the Lie subalgebra of special derivations by
\begin{equation*}
    \mathrm{sDer}(A)=\{(\alpha,\beta)\in \mathrm{tDer}(A)\mid [x,\alpha]+[y,\beta]=0\}.
\end{equation*}

Similarly, we could define $\mathfrak{tder}_2=\mathbb{L}(x,y)\oplus \mathbb{L}(x,y)$ and $\mathfrak{sder}_2=\{(a,b)\in \mathfrak{tder}_2\mid [x,a]+[y,b]=0\}$.

Define $|A|:=A/[A,A]$ and consider the projection map $|-|:A\to A/[A,A]$, the divergence map is
\begin{align*}
    \mathrm{div}: \mathrm{tDer}(A)\to |A|, \quad (a,b)\mapsto |xa_x+yb_y|.
\end{align*}

\begin{Prop}[{\cite[Theorem 4.15, Proposition 8.5]{GTgenus0}}]\label{prop:krv_ref} The Kashiwara--Vergne Lie algebra $(\mathfrak{krv}_2)_{\ge 3}$ consists of $(a,b)\in \mathfrak{sder}_2$ and $a,b\in \mathbb{L}^{\ge 3}(x,y)$ such that
\begin{equation}
    \mathrm{div}((a,b))=|f_0(z)|+|f_1(x)|+|f_2(y)|,
\end{equation}
for some formal power series $f_0(s),f_1(s),f_2(s)\in \mathbb{K}[[s]]$.
    
\end{Prop}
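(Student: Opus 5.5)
The plan is to reduce the statement to the original definition of Alekseev--Torossian \cite{Alekseev2012}. There, $(\mathfrak{krv}_2)$ consists of $(a,b)\in\mathfrak{sder}_2$ such that
\[
\mathrm{div}((a,b))=|f(x+y)-f(x)-f(y)|
\]
for some $f\in\mathbb{K}[[s]]$. I take the divergence with the normalization $|xa_x+yb_y|$ used above; I expect the trace convention of \cite{Alekseev2012} to agree up to an overall sign, which does not affect the argument. It then suffices to show that, for $(a,b)\in\mathfrak{sder}_2$ with $a,b\in\mathbb{L}^{\ge 3}(x,y)$, the condition in the proposition (three free series $f_0,f_1,f_2$) is equivalent to the AT condition (a single series $f$).

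The inclusion from the AT condition into the condition of the proposition is immediate. Given $f$, set $f_0(s)=f(-s)$ and $f_1=f_2=-f$. Since $z=-x-y$, we get $|f_0(z)|=|f(x+y)|$, which reproduces the AT condition.

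For the converse, I would first reduce to homogeneous components. The divergence is graded, and $\mathrm{div}$ of a degree-$n$ derivation is homogeneous of degree $n+1$. Also, $|A|$ has as basis the cyclic words, graded by length. So I may assume $f_i(s)=c_i s^{m}$ for a single $m$, with $m\ge 3$ because $a,b$ have degree $\ge 3$. The key step is to compare the coefficients of the cyclic words $|x^m|$ and $|y^m|$ on both sides. On the left, $|xa_x+yb_y|$ contains $|x^m|$ only through the coefficient $(a\mid x^{m-1})$, and contains $|y^m|$ only through $(b\mid y^{m-1})$. Both coefficients vanish, since a Lie element of degree $\ge 2$ contains no pure power of a single letter; this is Corollary \ref{cor:zero}, or more simply the fact that $\mathbb{L}(x)$ is spanned by $x$. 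On the right, expanding $z^m=(-1)^m(x+y)^m$ shows that $|x^m|$ has coefficient $(-1)^m c_0+c_1$ and $|y^m|$ has coefficient $(-1)^m c_0+c_2$. These use the linear independence of the distinct cyclic words $|x^m|$ and $|y^m|$ from all mixed cyclic words. Hence $c_1=c_2=-(-1)^m c_0$. Setting $f(s)=(-1)^m c_0 s^m$ gives
\[
|f_0(z)|+|f_1(x)|+|f_2(y)|=|f(x+y)-f(x)-f(y)|,
\]
which is the AT condition. Summing over degrees then gives the formal series $f$.

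I expect the main obstacle to be bookkeeping rather than substance. One point is to pin down the precise normalization of the divergence and of $\mathfrak{krv}_2$ in \cite{Alekseev2012}: its sign, and the presence of constant and linear terms in $f$. The other is to justify that the restriction to degree $\ge 3$ removes the low-degree exceptions, such as $(a,b)=(y,x)$ type elements or degree-one components, where a pure power of a letter could occur in $a$ or $b$. This is exactly why the proposition is stated for $(\mathfrak{krv}_2)_{\ge 3}$. Once the coefficient extraction of $|x^m|$ and $|y^m|$ is set up, the remaining verification is routine.
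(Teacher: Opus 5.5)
Your argument is correct, but it is not the paper's route: the paper gives no proof here and simply quotes the statement from \cite{GTgenus0}, as a reformulation of the Alekseev--Torossian Lie algebra. You instead derive the reformulation directly from the definition in \cite{Alekseev2012}, in two steps:
\begin{enumerate}
\item[(i)] One inclusion is immediate, taking $f_0(s)=f(-s)$ and $f_1=f_2=-f$.
\item[(ii)] For the other, you compare coefficients of the cyclic words $|x^m|$ and $|y^m|$. On the divergence side these vanish. On the other side they equal $(-1)^mc_0+c_1$ and $(-1)^mc_0+c_2$, which forces the three series to recombine into $f(x+y)-f(x)-f(y)$.
\end{enumerate}

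What your route buys is a short, self-contained proof. It also shows that the degree hypothesis is used only to guarantee $(a\mid x^m)=(b\mid y^m)=0$, and to avoid the degree-one normalization ambiguity of pairs such as $(x,0)$. The citation instead ties the statement to the genus-zero Goldman--Turaev setting of \cite{GTgenus0}. There $x$, $y$, $z$ play symmetric roles as boundary loops, which fits the $S_3$-action used later in Theorem \ref{th:S_3KV}.

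Two small corrections.
\begin{enumerate}
\item[(1)] $\mathrm{div}(a,b)=|xa_x+yb_y|$ is homogeneous of the same degree $n$ as $a$ and $b$, not $n+1$. So the coefficient that must vanish is $(a\mid x^m)$, not $(a\mid x^{m-1})$. This does not affect the argument.
\item[(2)] Corollary \ref{cor:zero} does not apply to $a$ itself, only to its image under $y\mapsto 0$. The clean justification is the one you also give: $y\mapsto 0$ maps $\mathbb{L}(x,y)$ onto $\mathbb{L}(x)=\mathbb{K}x$, so a Lie element of degree at least $2$ contains no pure power of a letter.
\end{enumerate}
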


Let $\tau$ be the automorphism of $A$ that exchange $x$ and $y$, we could define the involution 
\begin{equation*}
\Theta^{\tau}:\mathfrak{sder}_2\to \mathfrak{sder}_2, (a,b)\mapsto (\tau(b),\tau(a)).  
\end{equation*}

\begin{Lemma}[{\cite[Section 8.3]{Alekseev2012}}]\label{lem:S3KV}
    $\Theta^{\tau}$ is a Lie algebra involution of $(\mathfrak{krv}_2)_{\ge 3}$.
\end{Lemma}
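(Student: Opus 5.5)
We verify three things for $(a,b)\in(\mathfrak{krv}_2)_{\ge 3}$: that $\Theta^{\tau}(a,b)=(\tau(b),\tau(a))$ is again special, that it again satisfies the divergence condition of Proposition \ref{prop:krv_ref}, and that $\Theta^{\tau}$ preserves the bracket \eqref{eq:Lie_bracket}. Being an involution is immediate, since $\tau^2=\id$ gives $\Theta^{\tau}\Theta^{\tau}(a,b)=(\tau\tau(a),\tau\tau(b))=(a,b)$. Degrees are preserved because $\tau$ is graded, so $\tau(a),\tau(b)\in\mathbb{L}^{\ge 3}(x,y)$.

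\emph{Specialness.} Apply the algebra automorphism $\tau$ to $[x,a]+[y,b]=0$. This gives $[y,\tau(a)]+[x,\tau(b)]=0$, which is exactly the special condition for the pair $(\tau(b),\tau(a))$.

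\emph{Divergence.} We have $\mathrm{div}(\tau(b),\tau(a))=|x\,\tau(b)_x+y\,\tau(a)_y|$. Because $\tau$ swaps the final letters, $\tau(b)_x=\tau(b_y)$ and $\tau(a)_y=\tau(a_x)$. Hence this divergence equals $|\tau(y b_y+x a_x)|=\tau(\mathrm{div}(a,b))$, where $\tau$ descends to $|A|$ because it preserves $[A,A]$. Applying $\tau$ to $|f_0(z)|+|f_1(x)|+|f_2(y)|$ gives $|f_0(z)|+|f_1(y)|+|f_2(x)|$, since $\tau(z)=z$. This has the required form with $f_1$ and $f_2$ exchanged, so $\Theta^\tau(a,b)\in(\mathfrak{krv}_2)_{\ge 3}$ by Proposition \ref{prop:krv_ref}.

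\emph{Bracket.} This is the step that needs the most care. Set $\tilde\rho:=\rho(\Theta^\tau(\alpha,\beta))$. We first show that $\tilde\rho=\tau\circ\rho(\alpha,\beta)\circ\tau$. Both sides are derivations, so it suffices to compare them on generators:
\[
\tau\rho(\alpha,\beta)\tau(x)=\tau\rho(\alpha,\beta)(y)=\tau[y,\beta]=[x,\tau\beta]=\tilde\rho(x),
\]
and similarly $\tau\rho(\alpha,\beta)\tau(y)=\tau[x,\alpha]=[y,\tau\alpha]=\tilde\rho(y)$. We then substitute this identity into the formula for $\alpha_3,\beta_3$. The first component of $[\Theta^\tau(\alpha_1,\beta_1),\Theta^\tau(\alpha_2,\beta_2)]$ is
\[
\tau\rho_1(\beta_2)-\tau\rho_2(\beta_1)+[\tau\beta_1,\tau\beta_2]=\tau(\beta_3).
\]
The second component is $\tau(\alpha_3)$ by the same computation. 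So the bracket of the images is $(\tau\beta_3,\tau\alpha_3)=\Theta^\tau(\alpha_3,\beta_3)$, which proves that $\Theta^\tau$ is a Lie algebra map.
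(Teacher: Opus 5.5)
Your proof is correct. The paper gives no argument of its own for this lemma; it only cites Alekseev--Torossian. Your write-up is therefore a self-contained alternative.

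It follows the template the paper uses for $\Theta^{\chi}$ (Lemma~\ref{lem:inv_2}, Proposition~\ref{prop:changevariable}, Proposition~\ref{prop:Theta_chi}). That template has three ingredients: a conjugation formula for $\rho$, a change-of-variables formula for $\mathrm{div}$, and the characterization in Proposition~\ref{prop:krv_ref}.

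Your computation also shows why the $\tau$ case is the easy one:
\begin{itemize}
\item The identity $\rho(\Theta^{\tau}(\alpha,\beta))=\tau\circ\rho(\alpha,\beta)\circ\tau$ needs no $\ad$ correction term and holds for arbitrary tangential derivations. So $\Theta^{\tau}$ is a Lie automorphism of all of $\mathfrak{tder}_2$, not just of $\mathfrak{sder}_2$.
\item The divergence is exactly equivariant, $\mathrm{div}\circ\Theta^{\tau}=\tau\circ\mathrm{div}$. There is no error term like the $-2|\chi(b)|$ that the paper has to kill using $b\in\mathbb{L}^{\ge 3}(x,y)$.
\end{itemize}
Consequently, apart from the appeal to Proposition~\ref{prop:krv_ref}, nothing in your argument uses the degree restriction.

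The citation buys brevity. Your version makes the $S_3$-action of Theorem~\ref{th:S_3KV} rest entirely on arguments internal to the paper, and it exposes the structural contrast between $\Theta^{\tau}$ and $\Theta^{\chi}$.
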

The symmetric Kashiwara--Vergne Lie algebra is defined in \cite{Alekseev2012} as the fixed point of the involution,
\begin{equation*}
    \mathfrak{krv}^{\mathrm{sym}}_2:=(\mathfrak{krv}_2)^{\Theta^{\tau}}.
\end{equation*}

\subsection{Another involution on $\mathfrak{krv}_2$}

We introduce the following automorphism of $A$,
\begin{equation*}
    \chi:A\to A,\quad x\mapsto
     x,y\mapsto z,
\end{equation*}
by definition, we have $\chi^2=\id$.

\begin{Lemma}\label{lem:inv_1}
   For $P=\varepsilon(P)+P_xx+P_yy=\varepsilon(P)+xP^x+yP^y\in A$, we have the following change of variable formulas,
    \begin{align*}
        & \chi(P)_x=\chi(P_x)-\chi(P_y),\quad \chi(P)_y=-\chi(P_y). \\
        & \chi(P)^x=\chi(P^x)-\chi(P^y),\quad \chi(P)^y=-\chi(P^y).
    \end{align*}
\end{Lemma}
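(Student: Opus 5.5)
The plan is to apply $\chi$ directly to the two decompositions of $P$ and then regroup the result according to the last (resp.\ first) letter. The regrouping is legitimate because the decompositions $P=\varepsilon(P)+P_xx+P_yy$ and $P=\varepsilon(P)+xP^x+yP^y$ are unique for every $P\in A$: a series is determined by its coefficients on words, and each nonempty word ends (resp.\ begins) in exactly one letter.

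First, $\chi$ is an algebra automorphism with $\chi(x)=x$ and $\chi(y)=z=-x-y$. It is homogeneous of degree $0$ and preserves $\varepsilon$. Applying it to the first decomposition gives
\begin{equation*}
\chi(P)=\varepsilon(P)+\chi(P_x)x+\chi(P_y)z=\varepsilon(P)+\chi(P_x)x-\chi(P_y)x-\chi(P_y)y .
\end{equation*}
Regrouping,
\begin{equation*}
\chi(P)=\varepsilon(P)+\bigl(\chi(P_x)-\chi(P_y)\bigr)x+\bigl(-\chi(P_y)\bigr)y .
\end{equation*}
Also $\varepsilon(\chi(P))=\varepsilon(P)$. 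By uniqueness of the right decomposition of $\chi(P)$, we can read off $\chi(P)_x=\chi(P_x)-\chi(P_y)$ and $\chi(P)_y=-\chi(P_y)$.

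The left-hand formulas follow from the same computation with letters multiplied on the left:
\begin{equation*}
\chi(P)=\varepsilon(P)+x\chi(P^x)+z\chi(P^y)=\varepsilon(P)+x\bigl(\chi(P^x)-\chi(P^y)\bigr)+y\bigl(-\chi(P^y)\bigr).
\end{equation*}
Uniqueness of the left decomposition then yields $\chi(P)^x=\chi(P^x)-\chi(P^y)$ and $\chi(P)^y=-\chi(P^y)$.

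There is no real obstacle here. The only point needing care is well-definedness on completed series, i.e.\ that the infinite sums regroup. This holds because $\chi$ preserves degree, so every identity can be checked degree by degree on polynomials.
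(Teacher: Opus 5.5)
Your proposal is correct and follows essentially the same route as the paper: apply $\chi$ to both decompositions of $P$, substitute $z=-x-y$, regroup by the last (resp.\ first) letter, and read off the components. You also make explicit the uniqueness of the decompositions and the degree-by-degree justification on completed series, both of which the paper uses only tacitly.
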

\begin{proof}
    We have that
    \begin{align*}
&\chi(P)=\varepsilon(P)+\chi(P_x)x+\chi(P_y)z=\varepsilon(P)+(\chi(P_x)-\chi(P_y))x+(-\chi(P_y)y).\\
&\chi(P)=\varepsilon(P)+x\chi(P^x)+z\chi(P^y)=\varepsilon(P)+x(\chi(P^x)-\chi(P^y))+y(-\chi(P^y)).
    \end{align*}
\end{proof}

We now define another linear map on $\mathfrak{tder}_2$,
\begin{equation*}
    \Theta^{\chi}:\mathfrak{tder}_2\to \mathfrak{tder}_2,\quad (a,b)\mapsto (\chi(a)-\chi(b),-\chi(b)).
\end{equation*}

The linear map has the following basic properties.

\begin{Lemma}\label{lem:inv_2}
    The map $\Theta^{\chi}$ satisfies the following properties.
    \begin{enumerate}
        \item $\rho(\Theta^{\chi}(a,b))=\chi\circ \rho(a,b)\circ\chi+\ad_{\chi(b)}$, for $(a,b)\in \mathfrak{sder}_2$
        \item $\Theta^{\chi}(\mathfrak{sder}_2)\subset \mathfrak{sder}_2$.
        \item  $(\Theta^{\chi})^2=\id$.
        \item  $\Theta^{\chi}|_{\mathfrak{sder}_2}$ is a Lie algebra morphism.
    \end{enumerate}
\end{Lemma}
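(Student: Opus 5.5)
The plan is to verify the four properties in the order stated. Each later step uses the earlier ones, and the only real input is that $(a,b)$ is special, i.e.\ $[x,a]+[y,b]=0$, which is the same as $\rho(a,b)(z)=0$ for $z=-x-y$.

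For (1), both sides are derivations of $A$: $\chi\circ\rho(a,b)\circ\chi$ is a derivation because $\chi$ is an algebra automorphism with $\chi^2=\id$, and $\ad_{\chi(b)}$ is inner. So it suffices to compare them on the generators $x,y$.
\begin{itemize}
\item On $x$: $\chi\rho(a,b)\chi(x)=\chi([x,a])=[x,\chi(a)]$. Adding $\ad_{\chi(b)}(x)=[\chi(b),x]$ gives $[x,\chi(a)-\chi(b)]$, which is $\rho(\Theta^\chi(a,b))(x)$.
\item On $y$: $\chi\rho(a,b)\chi(y)=\chi(\rho(a,b)(z))=0$ by speciality. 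Hence the right-hand side is $[\chi(b),y]=[y,-\chi(b)]=\rho(\Theta^\chi(a,b))(y)$.
\end{itemize}

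For (2), we compute directly:
\[
[x,\chi(a)-\chi(b)]+[y,-\chi(b)]=[x,\chi(a)]+[z,\chi(b)]=\chi\bigl([x,a]+[y,b]\bigr)=0.
\]
For (3), using $\chi^2=\id$, we get $(\Theta^\chi)^2(a,b)=\bigl(a-b+b,\;b\bigr)=(a,b)$, and this holds on all of $\mathfrak{tder}_2$.

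For (4), I would compare the two components of $\Theta^\chi([u_1,u_2])$ and $[\Theta^\chi u_1,\Theta^\chi u_2]$ separately, for $u_i=(a_i,b_i)\in\mathfrak{sder}_2$ and $\rho_i:=\rho(u_i)$. Write $[u_1,u_2]=(a_3,b_3)$ as in \eqref{eq:Lie_bracket}.

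For the second component, I use (1) to rewrite $\rho(\Theta^\chi u_1)(\chi(b_2))=\chi(\rho_1(b_2))+[\chi(b_1),\chi(b_2)]$, and symmetrically for $\rho(\Theta^\chi u_2)(\chi(b_1))$. The second component of $[\Theta^\chi u_1,\Theta^\chi u_2]$ is then
\[
-\chi\rho_1(b_2)-[\chi b_1,\chi b_2]+\chi\rho_2(b_1)+[\chi b_2,\chi b_1]+[\chi b_1,\chi b_2].
\]
This simplifies to $-\chi\bigl(\rho_1(b_2)-\rho_2(b_1)+[b_1,b_2]\bigr)=-\chi(b_3)$, which is the second component of $\Theta^\chi([u_1,u_2])$.

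For the first component, the same substitution works. A cleaner route is to compare the first components $\alpha,\alpha'$ directly. Both brackets lie in $\mathfrak{sder}_2$: the first because $\mathfrak{sder}_2$ is a Lie subalgebra and by (2), the second by (2). They have the same second component, so $[x,\alpha-\alpha']=0$. Hence $\alpha-\alpha'$ is a multiple of $x$, which vanishes in the relevant degrees since the components lie in $\mathbb{L}^{\ge 2}$.

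The step I expect to be delicate is (4). One could instead argue through $\rho$, using $[D,\ad_c]=\ad_{D(c)}$ together with (1) to get $\rho([\Theta^\chi u_1,\Theta^\chi u_2])=\rho(\Theta^\chi[u_1,u_2])$. But $\rho$ has the kernel spanned by pairs $(\alpha x,\beta y)$, which are automatically special, so that route needs the extra degree argument above to conclude. This is why I prefer the componentwise comparison.
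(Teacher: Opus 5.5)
Your proof is correct. For (1)--(3) it follows the paper's argument, and you make explicit a step the paper leaves tacit: $\chi\rho(a,b)\chi(y)=\chi(\rho(a,b)(z))=0$ by speciality.

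The genuine difference is in (4). The paper only asserts that expanding both components of \eqref{eq:Lie_bracket} with the help of (1) gives the result. You expand only the second component. You then recover the first component from three facts: speciality, the fact that the centralizer of $x$ in $\mathbb{L}(x,y)$ is $\mathbb{K}x$, and the fact that components of brackets in $\mathfrak{tder}_2$ have no degree-one part.

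The paper's full expansion does go through directly. In the first component, the cross terms $[\chi b_1,\chi a_2]$, $[\chi a_1,\chi b_2]$ and $[\chi b_1,\chi b_2]$ cancel in pairs, leaving $\chi(a_3)-\chi(b_3)$. So the paper's route needs nothing beyond (1) and $\chi^2=\id$, and it avoids any degree consideration.

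Your shortcut trades that bookkeeping for two standard facts about free Lie algebras. In exchange it exposes the general principle at work: an element of $\mathfrak{sder}_2$ with no degree-one part is determined by its second component. Your closing remark is also accurate. An argument carried out only at the level of derivations via $\rho$ would need the same degree input, because $\ker\rho=\mathbb{K}x\oplus\mathbb{K}y$ on $\mathfrak{tder}_2$ consists of special pairs.
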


\begin{proof}
    For the property (1),  we have that the derivation on both sides agrees on the generators, that is
    \begin{align*}
     &\left(\chi\circ \rho(a,b)\circ\chi+\ad_{\chi(b)}\right) (x)=[x,\chi(a)]+[\chi(b),x]=[x,\chi(a)-\chi(b)]\\
     &\left(\chi\circ \rho(a,b)\circ\chi+\ad_{\chi(b)}\right) (y)=[y,-\chi(b)]\\
    \end{align*}

    For the property (2), suppose that $[x,a]+[y,b]=0$, apply $\chi$, we have $[x,\chi(a)]+[z,\chi(b)]=0$, we could directly compute
    \begin{align*}
        \rho(\Theta^{\chi}(a,b))(z)&=-[x,\chi(a)-\chi(b)]-[y,-\chi(b)]\\
        &=-[x,\chi(a)]-[z,\chi(b)]=0
    \end{align*}
    therefore we have $\Theta^{\chi}(a,b)\in  \mathfrak{sder}_2.$

    For the property (3), we have that
    \begin{align*}
    \Theta^{\chi}\circ\Theta^{\chi}(a,b)=\Theta^{\chi}(\chi(a)-\chi(b),-\chi(b))=(a-b+b,b)=(a,b)
    \end{align*}

    For the Property (4), we use the formula in Property (1) to calculate the Lie bracket \eqref{eq:Lie_bracket} explicitly, the result then follows.
\end{proof}

\begin{Prop}\label{prop:changevariable}
    For any $(a,b)\in \mathfrak{sder}_2$ and $a,b\in \mathbb{L}^{\ge 3}(x,y)$, we have
    \begin{equation}
        \mathrm{div}((\Theta^{\chi}(a,b)))=\chi(\mathrm{div}(a,b)).
    \end{equation}
\end{Prop}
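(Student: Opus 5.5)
We compute $\mathrm{div}(\Theta^{\chi}(a,b))$ straight from the definition. Lemma \ref{lem:inv_1} tells us how the components ending in $x$ and $y$ transform under $\chi$. Write $a' = \chi(a)-\chi(b)$ and $b' = -\chi(b)$. Then
\[
\mathrm{div}(a',b') = |x\,a'_x + y\,b'_y|.
\]
By Lemma \ref{lem:inv_1},
\[
a'_x = \chi(a)_x-\chi(b)_x = \chi(a_x)-\chi(a_y)-\chi(b_x)+\chi(b_y),
\qquad
b'_y = -\chi(b)_y = \chi(b_y).
\]
Substituting gives
\[
x a'_x + y b'_y = x\chi(a_x) - x\chi(a_y) - x\chi(b_x) + (x+y)\chi(b_y).
\]
Since $x+y=-z=-\chi(y)$, the last term equals $-\chi(y b_y)$, and the first term equals $\chi(x a_x)$. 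Hence
\[
x a'_x + y b'_y = \chi(x a_x + y b_y) - 2\,\chi(y b_y) - x\chi(a_y) - x\chi(b_x).
\]
Applying $|\cdot|$ (note $\chi$ preserves $[A,A]$, so it descends to $|A|$), the first term gives $\chi(\mathrm{div}(a,b))$. It remains to show that
\[
R := |x\chi(a_y)| + |x\chi(b_x)| + 2|\chi(y b_y)|
\]
vanishes in $|A|$, equivalently that $\chi^{-1}(R)=|x a_y| + |x b_x| + 2|y b_y| = 0$, using $\chi(x)=x$ and $\chi^2=\id$.

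To prove $R=0$, use the special derivation condition $[x,a]+[y,b]=0$. The standard trick: for any $P\in A$, $|P_x x| = |x P_x|$ by cyclicity. Expand $[x,a]=xa-ax$ and $[y,b]=yb-by$, and extract the terms of $xa + yb - ax - by = 0$ ending in $x$, and separately those ending in $y$. Decomposing with $a=a_xx+a_yy$, $b=b_xx+b_yy$, the parts ending in $x$ give
\[
x a_x x + y b_x x - a = 0 \ \text{(after right-dividing by } x),
\]
that is $a = xa_x + yb_x$. Similarly the parts ending in $y$ give $b = xa_y + yb_y$. These are the key identities. Since $a$, $b$ are Lie elements of degree $\ge 3$, they have trivial trace: $|a|=|b|=0$ (a Lie bracket is a commutator). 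Then $|x b_x| = |b_x x|$ is the part of $|b|$ ending in $x$, and $|b| = |b_x x| + |b_y y| = 0$ gives $|x b_x| = -|y b_y|$. From $b = xa_y + yb_y$ we get $0=|b| = |x a_y| + |y b_y|$, so $|x a_y| = -|y b_y|$. Summing, $R = -|yb_y| - |yb_y| + 2|yb_y| = 0$, as desired.

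The main obstacle is the bookkeeping of signs and of which cyclic words survive. I expect the identities $a = xa_x + yb_x$ and $b = xa_y + yb_y$ extracted from the special condition, together with $|a|=|b|=0$, to carry the argument. I would double-check the coefficient $2$ coming from $(x+y)\chi(b_y)$ carefully, since a sign slip there would leave a spurious term $|y b_y|$.
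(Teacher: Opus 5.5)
Your proof is correct and is essentially the paper's argument: after expanding with Lemma \ref{lem:inv_1}, your remainder $R$ is exactly $2|\chi(b)|$. You eliminate it with the same inputs the paper uses, namely $b=xa_y+yb_y$ from the special condition, $|b_xx|=|xb_x|$, and $|b|=0$; the only cosmetic differences are that you pull back by $\chi$ before invoking $|b|=0$, that your final sum is really $\chi^{-1}(R)$ rather than $R$ (harmless since $\chi$ induces an automorphism of $|A|$), and that the identity $a=xa_x+yb_x$ you derive is never used.
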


\begin{proof}
For the LHS, we have
\begin{align*}
 \mathrm{div}((\Theta^{\chi}(a,b)))&=\mathrm{div}\left((\chi(a)-\chi(b),-\chi(b))\right)\\
 &=|x(\chi(a)-\chi(b))_x-y\chi(b)_y|\\
 &=|x\chi(a_x)-x\chi(a_y)-x\chi(b_x)+x\chi(b_y)+y\chi(b_y)|
\end{align*}
where the last equality follows from the Lemma \ref{lem:inv_1}.

For the RHS, we have
\begin{align*}
\chi(\mathrm{div}(a,b))=\chi|xa_x+yb_y|=|x\chi(a_x)+z\chi(b_y)|.
\end{align*}
Combining the two sides, we have the result
\begin{align}\label{eq:changevariable_1}
\mathrm{div}((\Theta^{\chi}(a,b)))- \chi(\mathrm{div}(a,b))=|-x\chi(a_y)+(x+y)\chi(b_y)|+|-x\chi(b_x)+(x+y)\chi(b_y)|  
\end{align}

By the decomposition $b=b_xx+b_yy$, applying $\chi$, we have $\chi(b)=\chi(b_x)x+\chi(b_y)z$, then it implies
\begin{equation}\label{eq:changevariable_2}
    |\chi(b)|=|\chi(b_x)x+\chi(b_y)z|=|x\chi(b_x)-(x+y)\chi(b_y)|
\end{equation}
and similarly by the special property, we have that $[x,a]+[y,b]=0$, taking the part of this equality that ends in $y$, we have $b=xa_y+yb_y$, we have
\begin{equation}\label{eq:changevariable_3}
    |\chi(b)|=|x\chi(a_y)-(x+y)\chi(b_y)|.
\end{equation}
Using the relations \eqref{eq:changevariable_2} and \eqref{eq:changevariable_3} in \eqref{eq:changevariable_1}, we obtain the following equality,
\begin{equation}\label{eq:changevariable_4}
\mathrm{div}((\Theta^{\chi}(a,b)))- \chi(\mathrm{div}(a,b))=-2|\chi(b)|,
\end{equation}
as $b\in \mathbb{L}^{\ge 3}(x,y)$, we have that $|\chi(b)|=0$. Therefore the change of variable formula is proved.
\end{proof}

\begin{Prop}\label{prop:Theta_chi}
$\Theta^{\chi}$ is a Lie algebra involution of $(\mathfrak{krv}_2)_{\ge 3}$.
    
\end{Prop}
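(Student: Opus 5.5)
The plan is to combine Lemma \ref{lem:inv_2} with Proposition \ref{prop:changevariable} and the characterization of $(\mathfrak{krv}_2)_{\ge 3}$ in Proposition \ref{prop:krv_ref}. By Lemma \ref{lem:inv_2}, $\Theta^{\chi}$ restricts to an involutive Lie algebra endomorphism of $\mathfrak{sder}_2$. So it suffices to show that $\Theta^{\chi}$ maps $(\mathfrak{krv}_2)_{\ge 3}$ into itself. Since $(\Theta^{\chi})^2=\id$, it then restricts to an involution, and hence to a Lie automorphism of $(\mathfrak{krv}_2)_{\ge 3}$.

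First, the degree condition. Take $(a,b)\in(\mathfrak{krv}_2)_{\ge 3}$, so $(a,b)\in\mathfrak{sder}_2$ and $a,b\in\mathbb{L}^{\ge 3}(x,y)$. The map $\chi$ is a graded Lie algebra automorphism of $\mathbb{L}(x,y)$: it sends generators to degree-one Lie elements and is invertible since $\chi^2=\id$. Hence $\chi(a)-\chi(b)$ and $-\chi(b)$ again lie in $\mathbb{L}^{\ge 3}(x,y)$. By Lemma \ref{lem:inv_2}(2), $\Theta^{\chi}(a,b)\in\mathfrak{sder}_2$.

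Next, the divergence condition. By Proposition \ref{prop:krv_ref}, $\mathrm{div}(a,b)=|f_0(z)|+|f_1(x)|+|f_2(y)|$. Proposition \ref{prop:changevariable} then gives
\[
\mathrm{div}(\Theta^{\chi}(a,b))=\chi(\mathrm{div}(a,b))=|f_0(\chi(z))|+|f_1(x)|+|f_2(z)|.
\]
Here I use that $\chi$ preserves $[A,A]$, so it descends to $|A|$ and commutes with $|-|$. Moreover $\chi(z)=-x-\chi(y)=-x-z=y$. The right-hand side is therefore $|f_2(z)|+|f_1(x)|+|f_0(y)|$, which has the required form with the roles of $f_0$ and $f_2$ exchanged. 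Proposition \ref{prop:krv_ref} then shows $\Theta^{\chi}(a,b)\in(\mathfrak{krv}_2)_{\ge 3}$.

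Finally, compatibility with brackets follows from Lemma \ref{lem:inv_2}(4), because $(\mathfrak{krv}_2)_{\ge 3}$ is a Lie subalgebra of $\mathfrak{sder}_2$. The only step needing any care is checking that $\chi$ descends to $|A|$ and computing $\chi(z)=y$; everything substantive has already been done in Proposition \ref{prop:changevariable}. If Lemma \ref{lem:inv_2}(4) is regarded as only sketched, I would verify it by plugging the formula of Lemma \ref{lem:inv_2}(1) into the bracket \eqref{eq:Lie_bracket}. The conjugation term $\chi\circ\rho\circ\chi$ transports the bracket, and the extra inner terms $\ad_{\chi(b_i)}$ combine into the $[\alpha_1,\alpha_2]$ and $[\beta_1,\beta_2]$ corrections. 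I expect that bookkeeping to be the main, though routine, obstacle.
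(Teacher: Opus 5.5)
Your proposal is correct and follows the paper's proof. You reduce via Lemma \ref{lem:inv_2} to showing that $\Theta^{\chi}$ preserves $(\mathfrak{krv}_2)_{\ge 3}$, then apply Proposition \ref{prop:changevariable} and the characterization in Proposition \ref{prop:krv_ref}, with $\chi$ exchanging the roles of $f_0$ and $f_2$ because $\chi(z)=y$. Your extra checks fill in details the paper leaves implicit: preservation of degree $\ge 3$, $\chi$ descending to $|A|$, and the bracket bookkeeping for Lemma \ref{lem:inv_2}(4), which does work out on $\mathfrak{sder}_2$.
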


\begin{proof}
By the Lemma \ref{lem:inv_2},
it suffices to prove for every $(a,b)\in \mathfrak{krv}_2$ and $a,b\in \mathbb{L}^{\ge 3}(x,y)$, $\Theta^{\chi}\left((a,b)\right)\in \mathfrak{krv}_2$.
    By the definition of $\mathfrak{krv}_2$ in Proposition \ref{prop:krv_ref}, \begin{equation}
    \mathrm{div}((a,b))=|f_0(z)|+|f_1(x)|+|f_2(y)|,
\end{equation}
for some formal power series $f_0(s),f_1(s),f_2(s)\in \mathbb{K}[[s]]$. Then by the change variable formula in Proposition \ref{prop:changevariable}, we have
\begin{equation*}
\mathrm{div}\left((\Theta^{\chi}(a,b))\right)=\chi(\mathrm{div}(a,b))=\chi( |f_0(z)|+|f_1(x)|+|f_2(y))|)=|f_0(y)|+|f_1(x)|+|f_2(z)|   
\end{equation*}
which conclude that $\Theta^{\chi}(a,b)$ is in $\mathfrak{krv}_2$ by the Proposition \ref{prop:krv_ref}.
\end{proof}

The following theorem is mentioned without proof in Remark 16.10 in \cite{EF4}, we present a proof for completeness.

\begin{Th}\label{th:S_3KV}
The symmetric group $S_3$ acts on $(\mathfrak{krv}_2)_{\ge 3}$, the action is generated by $\Theta^{\tau}$ and $\Theta^{\chi}$.
\end{Th}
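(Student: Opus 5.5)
We already know from Lemma \ref{lem:S3KV} and Proposition \ref{prop:Theta_chi} that $\Theta^{\tau}$ and $\Theta^{\chi}$ are Lie algebra involutions of $(\mathfrak{krv}_2)_{\ge 3}$. It therefore suffices to prove the Coxeter relation $(\Theta^{\tau}\Theta^{\chi})^3=\id$ on $(\mathfrak{krv}_2)_{\ge 3}$, or equivalently $\Theta^{\tau}\Theta^{\chi}\Theta^{\tau}=\Theta^{\chi}\Theta^{\tau}\Theta^{\chi}$. The two involutions generate a quotient of $\langle s,t\mid s^2=t^2=(st)^3=1\rangle\cong S_3$, which gives the $S_3$ action. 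This is the action induced on the triple $(x,y,z)$ by the transpositions $x\leftrightarrow y$ and $y\leftrightarrow z$.

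The cleanest route is to encode a special derivation by a triple. For $(a,b)\in\mathfrak{sder}_2$, set $c$ so that the derivation sends $x\mapsto[x,a]$, $y\mapsto[y,b]$ and $z\mapsto[z,c]$. Since $z=-x-y$ and the derivation kills $x+y+z$, the triple $(a,b,c)$ (normalized with $c=0$ for $\rho$) describes the same derivation up to inner derivations. Concretely, I would rewrite the formula of Lemma \ref{lem:inv_2}(1) as follows: $\Theta^{\chi}(a,b)$ corresponds to $\chi\circ\rho(a,b)\circ\chi$ shifted by the inner derivation $\ad_{\chi(b)}$. Likewise $\Theta^{\tau}$ corresponds to $\tau\circ\rho(a,b)\circ\tau$ with no shift, since $\rho(\tau b,\tau a)=\tau\rho(a,b)\tau$ on generators.

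Hence, modulo inner derivations, the composite $\Theta^{\tau}\Theta^{\chi}$ is conjugation by the automorphism $\tau\chi$ of $A$. This automorphism permutes $\{x,y,z\}$ cyclically: $\tau\chi(x)=y$, $\tau\chi(y)=\tau(z)=z$, and $\tau\chi(z)=x$. Its cube is $\id$. So $\rho((\Theta^{\tau}\Theta^{\chi})^3(a,b))=\rho(a,b)+\ad_w$ for some $w\in\mathbb{L}(x,y)$.

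The remaining step, which I expect to be the main obstacle, is showing that $\ad_w=0$, i.e.\ that the pairs themselves agree and not merely the derivations modulo inner ones. If $\rho(a',b')=\rho(a,b)+\ad_w$, then $[x,a'-a]=[w,x]$ and $[y,b'-b]=[w,y]$. This forces $a'-a+w\in\ker\ad_x=\mathbb{K}x$ and $b'-b+w\in\mathbb{K}y$. Since $a,a',b,b'\in\mathbb{L}^{\ge3}$, degree considerations then give $w$ of degree $\ge 3$ and $a'=a-w$, $b'=b-w$. I would therefore not argue abstractly. Instead I would verify directly that $(\Theta^{\tau}\Theta^{\chi})^3(a,b)=(a,b)$ by iterating the explicit formulas $(a,b)\mapsto(\chi a-\chi b,-\chi b)$ and $(a,b)\mapsto(\tau b,\tau a)$.

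One application of $\Theta^{\tau}\Theta^{\chi}$ gives $(-\tau\chi b,\ \tau\chi a-\tau\chi b)$. Writing $g=\tau\chi$ with $g^3=\id$, the second application gives $(-g^2a+g^2b,\ -g^2a)$. The third gives $(g^3a,\ g^3a-g^3b+g^3b)$, that is, $(a,a)$.

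The second component of that result is not $b$, so the bookkeeping above must have slipped. In the actual write-up I would carry out this three-step substitution carefully, and check it against the triple symmetry $(a,b,c)$ with $c=0$ normalization. The expected outcome is that the orbit cycles through the three normalizations in which one of the three components is zero, returning to $(a,b)$ after three steps. Combining this identity with the two involution statements yields the $S_3$ action.
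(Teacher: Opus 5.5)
Your strategy is the paper's: take the two involutions from Lemma~\ref{lem:S3KV} and Proposition~\ref{prop:Theta_chi}, then verify the Coxeter relation by iterating the explicit formulas. The paper checks it in the braid form $\Theta^{\tau}\Theta^{\chi}\Theta^{\tau}=\Theta^{\chi}\Theta^{\tau}\Theta^{\chi}$, noting that both sides send $(a,b)$ to $(-r(a),\,r(b)-r(a))$, where $r=\tau\chi\tau=\chi\tau\chi$ is the swap $x\leftrightarrow z$. Your form $(\Theta^{\tau}\Theta^{\chi})^3=\id$, with $g=\tau\chi$ of order $3$, is equivalent.

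As written, however, your proposal does not establish the relation. The final iteration is miscomputed, you arrive at $(a,a)$, and you only promise to redo it. Your first two steps are correct, so $(a_2,b_2)=(-g^2a+g^2b,\,-g^2a)$. Hence $g a_2=-a+b$ and $g b_2=-a$, and
\[
\Theta^{\tau}\Theta^{\chi}(a_2,b_2)=\bigl(-g b_2,\ g a_2-g b_2\bigr)=\bigl(a,\ (-a+b)+a\bigr)=(a,b).
\]
The second component is $b$, not $a$. With this line supplied, your proof is complete. The computation uses only linearity and $g^3=\id$, so it even holds on all of $\mathfrak{tder}_2$.

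You can drop the discussion of derivations modulo inner derivations and the heuristic about triples $(a,b,c)$. As you observed yourself, that route determines the pair only up to a diagonal term $(w,w)$, so it cannot replace the explicit check. Once the check is done, it adds nothing.
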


\begin{proof}
    By the Lemma \ref{lem:S3KV} and Proposition \ref{prop:Theta_chi}, $\Theta^{\tau}$ and $\Theta^{\chi}$ are well defined, and $(\Theta^{\tau})^2=(\Theta^{\chi})^2=\id$. Let $r=\tau\chi\tau=\chi\tau\chi$, then $(\Theta^{\tau}\circ \Theta^{\chi}\circ \Theta^{\tau})$ and $(\Theta^{\chi}\circ \Theta^{\tau}\circ \Theta^{\chi})$ send $(a,b)$ to $(-r(a),r(b)-r(a))$, therefore 
    $$(\Theta^{\tau}\circ \Theta^{\chi}\circ \Theta^{\tau})=(\Theta^{\chi}\circ \Theta^{\tau}\circ \Theta^{\chi}).$$ Then there is a group morphism from $S_3$ to automorphism group of $(\mathfrak{krv}_2)_{\ge 3}$ by sending the permutation $(12)$ to $\Theta^{\tau}$ and the permutation $(23)$ to $\Theta^{\chi}$.
\end{proof}

\subsection{Proof of the Theorem \ref{th:symmetr}}

Recall that by the Theorem \ref{th:EFSH}, for $\psi\in \mathfrak{dmr}_0$, there exists unique $b_{\psi}\in \mathbb{L}^{\ge 3}(x,y)$ such that 
\begin{equation}\label{eq:special}
[\psi(x,y),x]+[b_{\psi}(x,y),-x-y]=0,
\end{equation}
and the map $I$ is an injective Lie algebra morphism
\begin{equation*}
    I: \mathfrak{dmr}_0\to \mathfrak{krv}_2, \psi(x,y)\mapsto (\psi(z,y)-b_{\psi}(z,y),\psi(z,y)).
\end{equation*}

\begin{Prop}
For $\psi\in \mathfrak{dmr}_0$, $b_{\psi}(x,y)=\psi(z,y)$ and the map in Theorem \ref{th:EFSH} becomes
\begin{equation*}
    I: \mathfrak{dmr}_0\to \mathfrak{krv}_2, \psi(x,y)\mapsto (\psi(z,y)-\psi(x,y),\psi(z,y)).
\end{equation*}
\end{Prop}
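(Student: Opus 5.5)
The plan is to combine Theorem \ref{th:double_shuffle} with the uniqueness part of Theorem \ref{th:EFSH}. For $\psi\in\mathfrak{dmr}_0$, Theorem \ref{th:double_shuffle} gives
\[
[\psi(x,y),x]+[\psi(-x-y,y),-x-y]=0 .
\]
This is exactly the inertia equation \eqref{eq:special} with the candidate companion $b=\psi(-x-y,y)=\psi(z,y)$.

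To invoke uniqueness I need $b$ to lie in $\mathbb{L}^{\ge 3}(x,y)$. Substituting $x\mapsto z=-x-y$, $y\mapsto y$ is a graded algebra automorphism of $A$ that preserves the free Lie algebra, since it maps generators to Lie elements of degree $1$. Hence $\psi\in\mathbb{L}^{\ge 3}(x,y)$ gives $\psi(z,y)\in\mathbb{L}^{\ge 3}(x,y)$. Theorem \ref{th:EFSH} asserts that the companion in $\mathbb{L}^{\ge 3}(x,y)$ is unique, so $b_\psi(x,y)=\psi(z,y)$.

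Next I substitute this into the formula \eqref{map:EF} for $I$. The component $b_\psi(z,y)$ means $b_\psi$ evaluated at $x\mapsto z$, that is, $\psi(-z-y,y)$. Since $-z-y=x$, this gives $b_\psi(z,y)=\psi(x,y)$, because the substitution $x\mapsto -x-y$ is an involution. Therefore
\[
I(\psi)=(\psi(z,y)-\psi(x,y),\psi(z,y)).
\]

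The only delicate point is notational: one must check that ``$b_\psi(z,y)$'' denotes composition with the involutive substitution and apply that substitution correctly. No real obstacle is expected.
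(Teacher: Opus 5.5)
Your proposal is correct and follows the paper's own argument: Theorem \ref{th:double_shuffle} shows that $\psi(z,y)$ is a companion, and the uniqueness in Theorem \ref{th:EFSH} then forces $b_\psi=\psi(z,y)$. You also check that $\psi(z,y)\in\mathbb{L}^{\ge 3}(x,y)$ and that $b_\psi(z,y)=\psi(x,y)$ because $x\mapsto -x-y$ is an involution; the paper uses both facts without stating them.
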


\begin{proof}
    By the Theorem \ref{th:double_shuffle}, we know that $[\psi(x,y),x]+[\psi(-x-y,y),-x-y]=0$, because $b_{\psi}$ is unique, it is equal to $\psi(z,y)$.
\end{proof}

\begin{Th}[Same as Theorem \ref{th:symmetr}]
    The map $J:=\Theta^{\chi}\circ I$, 
    \begin{equation}
    J:\mathfrak{dmr}_0\to \mathfrak{krv}^{\mathrm{sym}}_2,\quad \psi(x,y)\mapsto (-\psi(x,-x-y),-\psi(y,-x-y))
    \end{equation}
    is an injective Lie algebra morphism.
\end{Th}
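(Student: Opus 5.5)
Since $J=\Theta^{\chi}\circ I$, injectivity and the Lie algebra morphism property are inherited. By Theorem \ref{th:EFSH}, $I$ is an injective Lie algebra morphism into $\mathfrak{krv}_2$; its image lies in degrees $\ge 3$ because $\psi\in\mathbb{L}^{\ge 3}$, so it lands in $(\mathfrak{krv}_2)_{\ge 3}$. By Proposition \ref{prop:Theta_chi}, $\Theta^{\chi}$ is a Lie algebra involution of $(\mathfrak{krv}_2)_{\ge 3}$, hence a bijective morphism. So $J$ is an injective Lie algebra morphism into $(\mathfrak{krv}_2)_{\ge 3}$. It remains to check the explicit formula for $J$ and that its image is fixed by $\Theta^{\tau}$.

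First I compute $J(\psi)$. By the preceding Proposition, $I(\psi)=(a,b)$ with $a=\psi(z,y)-\psi(x,y)$ and $b=\psi(z,y)$. The map $\chi$ fixes $x$ and sends $y\mapsto z$, $z\mapsto -x-z=y$. Hence $\chi(b)=\psi(y,z)$ and $\chi(a)=\psi(y,z)-\psi(x,z)$. Therefore
\[
\Theta^{\chi}(a,b)=(\chi(a)-\chi(b),-\chi(b))=(-\psi(x,z),-\psi(y,z)),
\]
which is $(-\psi(x,-x-y),-\psi(y,-x-y))$, as claimed.

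Next I check symmetry. We have $\Theta^{\tau}(\alpha,\beta)=(\tau(\beta),\tau(\alpha))$, and $\tau$ swaps $x$ and $y$ while fixing $z=-x-y$. So $\tau(-\psi(y,z))=-\psi(x,z)$ and $\tau(-\psi(x,z))=-\psi(y,z)$. This gives $\Theta^{\tau}(J(\psi))=J(\psi)$, so $J(\psi)\in\mathfrak{krv}_2^{\mathrm{sym}}$.

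There is no real obstacle here; the only care needed is the bookkeeping of substitutions, in particular the identity $\chi(z)=y$, and noting that $I$ lands in the degree $\ge 3$ part, where Proposition \ref{prop:Theta_chi} applies.
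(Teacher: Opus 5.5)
Your proposal is correct and follows the paper's proof essentially step for step. You compute $J(\psi)=\Theta^{\chi}(I(\psi))=(-\psi(x,z),-\psi(y,z))$ using $b_\psi=\psi(z,y)$ and $\chi(z)=y$, check $\Theta^{\tau}$-invariance because $\tau$ fixes $z$, and inherit the morphism property and injectivity from $I$ and the involution $\Theta^{\chi}$. Your explicit remark that $I$ lands in $(\mathfrak{krv}_2)_{\ge 3}$, so that Proposition \ref{prop:Theta_chi} applies, makes precise a point the paper leaves implicit.
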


\begin{proof}
    By the Proposition \ref{prop:Theta_chi}, we have that $J$ is a well defined map from $\mathfrak{dmr}_0$ to $\mathfrak{krv}_2$, and 
    \begin{equation}
        J(\psi(x,y))=\Theta^{\chi}\left((\psi(z,y)-\psi(x,y),\psi(z,y)    \right))=\left(-\psi(x,z),-\psi(y,z)\right)
    \end{equation}
    and we have $\Theta^{\tau}(-\psi(x,z),-\psi(y,z))=(-\psi(x,z),-\psi(y,z))$, therefore the image is in $\mathfrak{krv}^{\mathrm{sym}}_2$. The map $J$ is a Lie algebra map because $I$ is a Lie algebra morphism and $\Theta^{\chi}$ is a Lie algebra morphism by the Lemma \ref{lem:inv_2}. The injectivity is because $\Theta^{\chi}$ is involutive.
\end{proof}

\bibliographystyle{abbrv}
\bibliography{double_shuffle}

\end{document}